\documentclass[12pt,reqno]{amsart}  

\usepackage{ifthen}
\newboolean{plots_included}
\setboolean{plots_included}{true}

\usepackage{float}

\usepackage{amssymb,amsbsy,amsfonts,amsthm,latexsym,amsopn,amstext,amsxtra,euscript,amscd,amsthm,url}
\usepackage{epsfig, subfigure, afterpage}
\usepackage[dvipsnames, x11names]{xcolor}
\usepackage{hyperref}
\usepackage{enumitem}

\usepackage[vmargin=2.2cm,hmargin=2.6cm]{geometry} 

\newtheoremstyle{plaintheorems}
{10pt}
{6pt}
{}
{}
{\bfseries}
{.}
{.5em}
{\thmname{#1}\thmnumber{ #2}\thmnote{ (#3)}}

\theoremstyle{plaintheorems}
\newtheorem{Rem}{Remark}

\newtheoremstyle{sltheorems}
{10pt}
{6pt}
{\slshape}
{}
{\bfseries}
{.}
{.5em}
{\thmname{#1}\thmnumber{ #2}\thmnote{ (#3)}}

\theoremstyle{sltheorems} 
\newtheorem{Thm}{Theorem}
\newtheorem{CThm}{Classical Theorem}
\newtheorem*{Thm*}{Theorem}

\newtheorem{lem}{Lemma}

\newtheorem*{cor*}{Corollary}

\newcommand{\C}{\mathbb{C}}

\newcommand{\eps}{\ensuremath{\varepsilon}}

\newcommand*\kronecker[2]{%
\relax\if@display
\expandafter{(\frac{#1}{#2})}
\else
\expandafter{(#1/#2)}
\fi
}
\makeatother

\makeatletter
\let\@@pmod\pmod
\DeclareRobustCommand{\pmod}{\@ifstar\@pmods\@@pmod}
\def\@pmods#1{\mkern4mu({\operator@font mod}\mkern 6mu#1)}
\makeatother

\newcommand\pieter[1]{{\color{blue}{#1}}}

\newcommand\ale[1]{{\color{DeepSkyBlue4}{#1}}}

\allowdisplaybreaks

\usepackage{mathtools} 

\newcommand{\Euler}{{\pmb \gamma}}

\newcommand{\BB}{\mathcal{B}}
\newcommand{\CC}{\mathcal{C}}
\newcommand{\MM}{\mathcal{M}}
\newcommand{\RR}{\mathcal{R}}

\newcommand{\Reg}{\operatorname{Reg}}

\hyphenation{Lan-gua-sco}
\usepackage{microtype} 
\usepackage{geometry}
\usepackage{caption} 
\makeatletter
\patchcmd{\env@cases}{1.2}{1}{}{}
\makeatother

\begin{document}

\title[Brauer-Siegel ratio for prime cyclotomic fields]
{The Brauer-Siegel ratio for prime cyclotomic fields}

\author[N. Kandhil, A. Languasco and P. Moree]{Neelam Kandhil, Alessandro Languasco and Pieter Moree}

\subjclass[2010]{Primary 11R18, 11R29; Secondary 11R47, 11Y60}
\keywords{cyclotomic fields, residues, class number, Siegel zero, Dirichlet $L$-series}

\begin{abstract}
The Brauer-Siegel theorem concerns the size
of the product of the class number and the regulator of a number
field $K$. We derive bounds for this product in case $K$ is
a prime cyclotomic field, distinguishing between
whether there is a Siegel zero or not.
In particular, we make a result of Tatuzawa (1953) more explicit. 
Our theoretical advancements are complemented by numerical illustrations 
that are consistent with our findings. 
\end{abstract}
  
\maketitle  

\section{Introduction}

Let $K$ be a number field, ${\mathcal O}$ its ring of integers and $s$ a complex variable.
For $\Re(s)>1$ the 
\emph{Dedekind zeta function} is defined by
\begin{equation*}
\zeta_K(s)=\sum_{\mathfrak{a}} \frac{1}{N{\mathfrak{a}}^{s}}
=\prod_{\mathfrak{p}}\frac{1}{1-N{\mathfrak{p}}^{-s}},
\end{equation*}
where $\mathfrak{a}$ ranges over 
the non-zero ideals in ${\mathcal O}$, $\mathfrak{p}$ 
over the prime ideals in ${\mathcal O}$, and $N{\mathfrak{a}}$ denotes the 
\emph{absolute norm}
of $\mathfrak{a}$,
that is the index of $\mathfrak{a}$ in $\mathcal O$.
Note that $\zeta_{\mathbb Q}(s)$ is merely the Riemann zeta-function $\zeta(s)$.
It is known that $\zeta_K(s)$ can be analytically continued to $\C \setminus \{1\}$,
and that it has a simple pole at $s=1$. It has residue 
\begin{equation}
    \label{Kresidue}
\RR(K)=\frac{2^{r_1}(2\pi)^{r_2} h(K)\Reg(K)}{\omega_K \sqrt{d_K}},
\end{equation}
where $r_1$ and $r_2$ denote the number of real, 
respectively complex embeddings of $K$, $d_K$ the absolute value of 
the discriminant, $\omega_K$ the roots of unity 
in $K$, $\Reg(K)$ its regulator and $h(K)$ its class number.
Formula \eqref{Kresidue} is called the \emph{analytic class number formula}.
In it the only mysterious quantity
is $h(K)\Reg(K)$ and one could hope to get bounds on it via estimates of $\RR(K)$.
One has for example, under the Generalized Riemann Hypothesis 
and the strong Artin conjecture for \(\zeta_K(s)/\zeta(s)\), 
\begin{equation}
\label{conditional-estims}
\Bigl(\frac{1}{2} + o(1)\Bigr) \frac{\zeta(n)}{e^\Euler \log\log d_K} 
\le 
\RR(K) \le (2 + o(1))^{n-1} \bigl(e^\Euler \log\log d_K\bigr)^{n-1},
\end{equation}
where $n$ denotes the degree of $K$, see \cite[Section~3]{chokim},  and $\Euler$ is  Euler's constant. 
In 2015, Louboutin \cite{Lou15} (see also \cite{Li12}) gave a weaker, but
unconditional, bound for $\RR(K)$. 
More precisely, he demonstrated that for any given $\eps > 0$ 
and $n_0 \geq 5$, there exists a number $\rho_0$ such that for 
all number fields $K$ of degrees $n \geq n_0$ and 
$d_K \geq \rho^{n}_0$, we have
\begin{equation}\label{messybound}
    \mathcal{R}(K) \leq \frac{3}{\sqrt{n}}    \Bigl( \frac{c(1+\eps) e^{\Euler +\sqrt{6/n}} \log d_K}{n}\Bigr)^{n-1},
\end{equation}
where $c= \frac{1}{2} \bigl(1- \frac{1}{\sqrt{5}}\bigr)$.

From now on our focus is exclusively on cyclotomic fields $K= {\mathbb Q}(\zeta_q)$ of
prime conductor $q \ge 3$. Then $d_K = q^{q-2}$ and writing $h(q)$ for $h(K)$ and
$\Reg(q)$ for $\Reg(K)$,  
we have
\begin{equation}
\label{oud}
\zeta_K (s)=\zeta(s)\prod_{\chi\ne \chi_0}L(s,\chi),
\end{equation}
where $\chi$ runs over the non-principal characters modulo $q$
and $L(s,\chi)$ denotes a Dirichlet $L$-function. 
This identity in combination with \eqref{Kresidue} gives
\[
\RR(q) = \prod_{ \chi \ne \chi_0} L(1,\chi) = \frac{h(q)\Reg(q)}{H(q)},
\]
where
\(
H(q) = 2 \sqrt{q} \bigl(\frac{q}{2\pi}\bigr)^{\frac{q-1}{2}}.
\)

In analogy with the terminology used for the relative class number, we will 
call $\RR(q)$ the \emph{Brauer-Siegel ratio}
for a prime cyclotomic field, a term that seems to have been introduced by Ulmer \cite{Ulmer}
in the context of abelian varieties over function fields.
In his Theorem 3, Tatuzawa \cite{Tatu} proved that
for every positive $\eps$ there exists $c(\eps)>0$ such
that 
\begin{equation}
\label{Tatu-thm3}
\frac{c(\eps)}{q^\eps} < \RR(q) < (\log q)^c,
\end{equation}
where $c>0$ is an absolute constant.

Here, adapting the technique used in Kandhil et al.\,\cite{paper1-rq} to study
the order of magnitude of the Kummer ratio\footnote{This analogy also 
speaks in favor of the terminology Brauer-Siegel ratio.} for the relative class
number of prime cyclotomic field, we 
bound the value of $c$ in \eqref{Tatu-thm3}.
We show that $c$ is essentially 
at most $2$ in the most general case; 
otherwise it is less than $1$.
Moreover, we explicitly show the role
of the \emph{Siegel zero} 
in both the bounds appearing in \eqref{Tatu-thm3}.
Stark \cite[Lemma~3]{stark}\footnote{In fact Stark's region
is not the largest known, see, e.g., Louboutin \cite{Lou15b}.}, showed that 
$\zeta_K(s)$ (with $K\ne \mathbb Q$) has at most one zero in the region in the complex plane determined by
$$
\Re(s) 
\ge
1 - \frac{1}{4 \log d_K},
\quad \quad
|\Im(s)| \le \frac{1}{4 \log d_K}.
$$

If such a zero exists, it is
real, simple and often called 
\emph{Siegel zero}. 
When $K = \mathbb{Q}(\zeta_q)$, by  \eqref{oud},  such a Siegel zero is  a zero
of a Dirichlet $L$-series attached to a real and quadratic 
\emph{exceptional character}\!\!~$\pmod*{q}$.

In our result the Siegel zero contribution will be expressed using the \emph{exponential
integral function} 
\begin{equation}
\label{E1-series}
E_1(x) :=
\int_{x}^{\infty} e^{-t}\frac{dt}{t}
=
- \Euler -\log x  +\int_0^x (1-e^{-t})\frac{dt}{t}= 
- \Euler - \log x
- \sum_{k=1}^{\infty} \frac{(-x)^k}{(k!)k}\quad (x>0).
\end{equation}

We are now ready to state our main theorem that makes
Tatuzawa's one \cite[Theorem~3]{Tatu} more explicit.

\begin{Thm}
\label{rq-direct3}
Let $\ell(q)$ be a function that tends arbitrarily slow and monotonically 
to infinity as $q$ tends to infinity.
There is an effectively computable prime
$q_0$ (possibly depending on $\ell$) 
such that the following statements are true:
\begin{enumerate}[label={\arabic*)}, ref=\arabic*, wide, nosep, itemindent= 4pt, after=\vspace{-8pt}]
\item
\label{nosiegelzero3}
If for some $q\ge q_0$ the family 
of Dirichlet $L$-series $L(s, \chi)$, with $\chi$ any non-principal character modulo $q$, 
has no Siegel zero, then 
\begin{equation}
\label{nosiegelzero-estims}
 \frac{e^{-1.87}}{(\log q)^{1-\xi}}
 <
\prod_{ \chi \ne \chi_0} L(1,\chi) 
 < 
e^{0.51}
\, (\log q)^{1-\xi},
\end{equation} 
for some absolute constant $\xi$.
\item
\label{Siegelzero3} 
If for some $q\ge q_0$ the family 
of Dirichlet $L$-series $L(s, \chi)$, with $\chi$ any non-principal character modulo 
$q$, has a Siegel zero $\beta_0$ then
$$
\frac{e^{-1.87}e^{-E_1(1- \beta_0)} }{
\, (\log q)^2 \, 
\ell(q)}
<
\prod_{ \chi \ne \chi_0} L(1,\chi)
<  
e^{0.51}e^{-E_1(1- \beta_0)} 
\, (\log q)^2 \, 
\ell(q).
$$ 

\end{enumerate}
\end{Thm}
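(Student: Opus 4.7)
My plan is to pass to logarithms and study
$$S(q) := \log \RR(q) = \sum_{\chi\ne\chi_0}\log L(1,\chi),$$
bounding $S(q)$ two-sidedly with explicit constants. The convenient algebraic starting point is the Mertens-type factorisation obtained from \eqref{oud} by dividing by $\zeta(s)$ and unpacking the splitting behaviour of primes in $\mathbb{Q}(\zeta_q)$: writing $f(p) = \mathrm{ord}_q(p)$ and $g(p) = (q-1)/f(p)$ for $p\neq q$, one has
$$\RR(q) = \prod_{p\neq q}(1-1/p)\,(1-1/p^{f(p)})^{-g(p)}.$$
The dominant contribution to $\log\RR(q)$ then comes from primes with $f(p)=1$, i.e.\ $p\equiv 1\pmod q$, whose distribution is governed by the zero-free regions of the $L(s,\chi)$.

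The first analytic step is a truncated explicit formula for $\log L(1,\chi)$ uniform over non-principal $\chi$ admitting Stark's zero-free region. For a cutoff $y=y(q)$ one writes
$$\log L(1,\chi) = \sum_{n \le y}\frac{\chi(n)\Lambda(n)}{n\log n} + E(\chi,y,q),$$
where $E(\chi,y,q)$ is estimated by a Perron-type contour shift whose residues are contributed by the non-trivial zeros of $L(s,\chi)$. Under the hypothesis of (\ref{nosiegelzero3}), all zeros lie in Stark's region and contribute $O(1)$; summing over $\chi$ and applying character orthogonality collapses the main sum onto primes in the residue class $1\pmod q$, matching the Mertens product above. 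Combining explicit versions of the prime number theorem in arithmetic progressions, Mertens' second theorem, and Stark's bound, an optimised truncation $y$ yields bounds of the form
$$-1.87 - (1-\xi)\log\log q \;\le\; S(q) \;\le\; 0.51 + (1-\xi)\log\log q,$$
from which \eqref{nosiegelzero-estims} follows upon exponentiating.

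For part~(\ref{Siegelzero3}) I would separate off the exceptional real character $\chi_1$,
$$\log \RR(q) = \log L(1,\chi_1) + \sum_{\chi\ne\chi_0,\chi_1}\log L(1,\chi),$$
apply the case~(\ref{nosiegelzero3}) analysis to the restricted sum (losing only a bounded factor from the absent character), and then extract $\log L(1,\chi_1)$ via the Hadamard product of $L(s,\chi_1)$. Integrating
$$\log L(1,\chi_1) = \log L(\sigma,\chi_1) + \int_\sigma^1 \frac{L'}{L}(u,\chi_1)\,du$$
and letting $\sigma \to \beta_0^+$ using $L(\beta_0,\chi_1)=0$, the singular contribution of the Siegel zero to $L'/L$ integrates to exactly $-E_1(1-\beta_0)$ once the regular part is absorbed into the $-\Euler-\log x$ expansion in \eqref{E1-series}. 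Siegel's ineffective bound on $L'(\beta_0,\chi_1)$ forces the arbitrarily slowly growing factor $\ell(q)$.

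The hard part will be the explicit numerical bookkeeping needed to pin down the constants $1.87$, $0.51$ and $\xi$: this requires balancing Ramar\'e--Rumely-type explicit Mertens and Chebyshev estimates, explicit prime number theorem bounds in arithmetic progressions modulo~$q$, and an explicit version of Stark's zero-free region, with the truncation length $y$ tuned so that error and main term exchange favourably. A more conceptual subtlety is that in part~(\ref{Siegelzero3}) the contribution of the exceptional character must be cleanly separated from the Mertens tail without double-counting, a decomposition I would model on the template developed for the Kummer ratio in~\cite{paper1-rq}, adapted from a sum over odd characters only to the full non-principal character group.
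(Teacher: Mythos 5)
Your overall architecture (take logarithms, convert $\sum_{\chi\ne\chi_0}\log L(1,\chi)$ into a statement about primes $p\equiv 1\pmod q$ via orthogonality/splitting types, feed in an explicit formula and Mertens-type estimates) matches the paper's, but there are two genuine gaps at exactly the points where the theorem's shape is decided. First, in Part \ref{nosiegelzero3} you never identify where the saving $\xi$ in the exponent $1-\xi$ comes from. Stark's zero-free region has width $\asymp 1/(q\log q)$ and gives essentially nothing about $\pi(x;q,1)$ for $x$ polynomial in $q$, and ``explicit PNT in arithmetic progressions'' is simply not available in that range; the only unconditional input is a sieve bound, and the classical Brun--Titchmarsh constant $2$ in \eqref{BT-estim} produces, after subtracting the Mertens sum $S(x)\sim\log_2 x$, an exponent exactly $1$, i.e.\ only Tatuzawa's bound \eqref{Tatu-thm3}. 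The whole point of the no-Siegel-zero hypothesis in the paper is that it unlocks Maynard's improved Brun--Titchmarsh constant $2(1-\eps)$ as in \eqref{BT-estim-strong-M-I}, and it is that $\eps$ which becomes the $\xi$ in $(\log q)^{1-\xi}$. Without citing this (or an equivalent Motohashi/Friedlander--Iwaniec-type result) your argument cannot break the exponent $1$.

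Second, your treatment of Part \ref{Siegelzero3} has two problems. Attributing the factor $\ell(q)$ to ``Siegel's ineffective bound on $L'(\beta_0,\chi_1)$'' is both incorrect and incompatible with the theorem, which asserts an \emph{effectively computable} $q_0$; in the paper $\ell(q)$ enters only through the choice of truncation $x_1=q^{\ell(q)}$ needed to make the zero-sum estimates of \cite{LuZhang} into $o(1)$, and the Siegel zero itself contributes the explicit, effective term $-E_1(1-\beta_0)$ through the explicit formula on the range $x_1<p\le x_2$. Moreover, peeling off $\chi_1$ and ``applying the case (1) analysis to the restricted sum'' does not interface with the sieve input: Brun--Titchmarsh bounds $\pi(x;q,1)$, which encodes \emph{all} characters at once, so you cannot run the improved (or even the standard) bound on the non-exceptional characters separately, and when a Siegel zero exists the improved constant is unavailable anyway --- that is precisely why the exponent degrades from $1-\xi$ to $2$. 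Finally, you never address the prime-power contribution $\Sigma_2$ in \eqref{starting3}: bounding it by an absolute constant uniformly in $q$ requires an Ankeny--Chowla-type argument (Lemma \ref{lem1-referee}), and this constant $\mathcal{A}\approx 1.6$ is where most of the numerical values $0.51$ and $-1.87$ actually come from, so it cannot be left as ``bookkeeping.''
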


\newcommand{\bound}{10^7}
We were able to perform extensive computations for the odd 
primes up to $10^7$ using the Fast Fourier Transform method
already presented in \cite{Languasco2021a}, \cite{Languasco2021}, see also \cite{LanguascoR2021}. They
show a remarkable fit between $\RR(q)=\prod_{ \chi \ne \chi_0} L(1,\chi)$ and 
$c/(\log q)^{3/4}$, with $c\in(1/5, 2/3)$, see Figure \ref{fig1}. In this respect,
the scatter plot of the normalized values $\RR(q) (\log q)^{3/4}$
presented in Figure \ref{fig2} is particularly relevant.
We think it is possible that the ``true'' order of magnitude for $\RR(q)$ 
in Theorem \ref{rq-direct3} might be the one on the left hand side of \eqref{nosiegelzero-estims} 
with $\xi=1/4$.
In Figure \ref{fig3} we show the histograms obtained using the values presented
into the first two figures.

\begin{Rem}
All constants in  Theorem \ref{rq-direct3} can be further sharpened 
by arguing as in Remark \ref{improved-constants} below.
\end{Rem}

\begin{Rem}
It is a consequence of Theorem \ref{rq-direct3} that asymptotically 
the upper bounds \eqref{conditional-estims} and \eqref{messybound} 
are quite weak for prime cyclotomic fields.
However, the \emph{lower} bound in \eqref{conditional-estims} seems reasonable sharp
in this case.
\end{Rem}

\begin{Rem}
\label{E1bound}
We have
$1 \ll E_1(1- \beta_0) < \eps \log q + c(\eps)$,
where  $c(\eps)$ is ineffective.
Since 
$0<1-e^{-x}< x$, it follows from the first equality in \eqref{E1-series} that
$$-\Euler - \log x < E_1(x) < -\Euler - \log x + x\quad (x>0).$$
On
using that for every $\eps>0$ there exists a constant $c_1(\eps)$ such that
$\beta_0 < 1- c_1(\eps)q^{-\eps}$,
the bounds for $E_1(x)$ lead to
$1 \ll E_1(1- \beta_0) < \eps \log q + c(\eps),$
where $c(\eps)$ is ineffective. 
Using the weaker,
but with an effective constant, estimate $\beta_0 < 1- cq^{-1/2}(\log q)^{-2}$ we obtain that 
$1 \ll E_1(1- \beta_0) < \frac{1}{2} \log q + 2\log \log  q + c_1$,
where $c_1>0$ is an effective constant.
We also recall that Bessassi \cite[Theorem~17]{Bessassi} proved that 
$\beta_0 < 1- 6/(\pi \sqrt{q})$ for $q\equiv 3 \pmod*{4}$
and hence in this case one obtains
$1 \ll E_1(1- \beta_0) < \frac{1}{2} \log q + \log (\pi/6)$.
\end{Rem}

Clearly Theorem \ref{rq-direct3} has implications for the 
asymptotic estimates of $h(q)\Reg(q)$; for example, Part \ref{Siegelzero3} 
and the estimates of Remark \ref{E1bound} yield  
\[
\log(h(q) \Reg(q)) 
=
\frac{q}{2} \log q- \frac{q}{2} \log (2\pi) + O(\log \log q)
\quad (q\rightarrow \infty),
\]
improving on the Brauer-Siegel implication 

\begin{equation*}
\log(h(q) \Reg(q)) \sim \log \sqrt{d_q} \sim \frac{q}{2}\log q \quad (q\rightarrow \infty).
\end{equation*}

The paper is organized as follows:
In Section \ref{sec:prelim} 
we recall results we need (mainly from prime number theory) 
and in Section \ref{useful_lemma} we prove a useful lemma about a sum
over prime powers in an arithmetic progression modulo a prime $q\ge 3$.
Section \ref{Thm1-proof} is devoted to the proof of 
Theorem \ref{rq-direct3}.
Section \ref{sec:numerical} describes  an efficient algorithm to compute $\RR(q)$
and some graphical representations
regarding its distribution.
Section \ref{sec:mertens-analogies} establishes 
some analogies between the prime sums over characters connected 
with $\log \RR(q)$ and the ones for the Mertens' constants in arithmetic
progressions.

\section{Preliminaries}
\label{sec:prelim}
\subsection{Notations}
Throughout this article, we will use the following standard notations:
\begin{equation*}
\pi(t) = \sum_{p \le t} 1, \quad \quad\pi(t;d,b) 
= 
\sum_{\substack{p \le t \\ p \equiv b  \pmod*{d}}} 1,
\end{equation*}
\begin{equation*}
\theta(t;d,b) = \sum_{\substack{p \le t \\ p \equiv b  \pmod*{d}}} \log p \quad  \text{and} \quad
\psi(t;d,b) = \sum_{\substack{n \le t \\ n \equiv b  \pmod*{d}}} \Lambda(n),   
\end{equation*}
where $\Lambda$ denotes the von Mangoldt function 
and $b$ and $d$ are coprime.

\subsection{Siegel zeros}

The presence of a Siegel zero strongly influences the 
distribution of the primes in the progressions modulo $q$. 
We present two classical results in this direction we will make use of.
\begin{CThm}[Brun-Titchmarsh\footnote{For a proof, see, e.g., 
Montgomery-Vaughan \cite[Theorem~2]{MVsieve}.}]
Let $x,y>0$ and $a,q$ be positive integers such that $(a,q)=1$.
Then
\begin{equation}
\label{BT-estim}
\pi(x+y;q,a) - \pi(x;q,a) < \frac{2y}{\varphi(q) \log(y/q)},
\end{equation}
for all $y >q$.
\end{CThm}

In particular, a key role is played by the constant $2$ present in \eqref{BT-estim}; 
from the works of Motohashi \cite{Motohashi1979}, Friedlander-Iwaniec
\cite{FriedlanderI1997}, Ramar\'e \cite[Theorems~6.5-6.6]{Ramare2009}  and Maynard
\cite{Maynard2013}, it is well known that replacing such a constant with any value less
than $2$ is equivalent with assuming that there does not exist a Siegel zero 
for $\prod_{\chi\ne \chi_0}L(s,\chi)$.

In Part \ref{nosiegelzero3} of Theorem \ref{rq-direct3}
we will in fact assume that $\prod_{\chi\ne \chi_0}L(s,\chi)$ has no Siegel zero
and we will make use of the following result 
by Maynard \cite[Proposition~3.5, second part]{Maynard2013}.

\begin{Thm*}[Maynard]
There is a fixed constant $\eps>0$ such that
there exists an effectively computable constant $q_1$, such that  if
the set of the non-principal Dirichlet $L$-functions
$\pmod*{q}$, for $q>q_1$, does not have a Siegel zero then
for $x \ge q^{7.999}$ and for any $b$ co-prime with $q$ we have
that
\begin{equation}
\label{BT-estim-strong-M-I}
\Big\vert \psi(x;q,b) - \frac{x}{\varphi(q)} \Big\vert 
<  
\frac{(1-\eps)x}{\varphi(q)}.
\end{equation}
\end{Thm*}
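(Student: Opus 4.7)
The plan is to leverage the orthogonality of Dirichlet characters modulo $q$ to write
\begin{equation*}
\psi(x;q,b) - \frac{x}{\varphi(q)} = \frac{1}{\varphi(q)} \sum_{\chi \ne \chi_0} \overline{\chi}(b)\, \psi(x,\chi) + O\Bigl(\frac{\log x}{\varphi(q)}\Bigr),
\end{equation*}
after absorbing the difference between $\psi(x,\chi_0)$ and $\psi(x)$ into the error term. This reduces everything to a uniform bound on $|\psi(x,\chi)|$ for every non-principal character $\chi$ modulo $q$. Concretely, the goal becomes to show, under the absence of a Siegel zero and for $x \ge q^{7.999}$, that
\begin{equation*}
\frac{1}{x}\sum_{\chi \ne \chi_0} |\psi(x,\chi)| \le 1-\eps
\end{equation*}
for some fixed $\eps>0$.

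First I would apply the standard truncated explicit formula
\begin{equation*}
\psi(x,\chi) = -\sum_{\rho : |\Im \rho| \le T} \frac{x^{\rho}}{\rho} + O\Bigl(\frac{x (\log qx)^2}{T}\Bigr),
\end{equation*}
with $T$ a small fixed power of $x$, where $\rho$ runs over the non-trivial zeros of $L(s,\chi)$. Since the family $\{L(s,\chi)\}_{\chi \ne \chi_0}$ is assumed to have no Siegel zero, the classical Landau-Page theorem combined with the Deuring-Heilbronn repulsion phenomenon yields a Linnik-type effective zero-free region $\Re(\rho) \le 1 - c_0/\log q$, valid uniformly for $|\Im \rho| \le T$.

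Next I would appeal to a log-free zero-density estimate of Huxley-Jutila-Heath-Brown type,
\begin{equation*}
\sum_{\chi \bmod q} N(\alpha, T, \chi) \ll (qT)^{A(1-\alpha)}(\log qT)^B, \qquad \tfrac{1}{2} \le \alpha \le 1,
\end{equation*}
for absolute constants $A, B$. Writing $\sum_{\chi \ne \chi_0} \sum_\rho x^{\Re \rho}/|\rho|$ as a Stieltjes integral against $N(\alpha, T, \chi)$ and dyadically decomposing in $1-\alpha$, one controls this double sum by a quantity of the form $x \cdot \sup_{\alpha} \bigl((qT)^{A(1-\alpha)} x^{-(1-\alpha)}\bigr) (\log qx)^C$. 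The threshold $x \ge q^{7.999}$ is chosen precisely so that, with the best known constant $A<8$, the bracket is a decaying geometric series in $1-\alpha$, making the cumulative contribution from zeros in the critical strip strictly $o(x)$.

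The main obstacle is the \emph{quantitative} tuning to push the resulting constant strictly below $1$, not merely below $2$ as in the classical Brun-Titchmarsh argument. Since the exponent $7.999$ pins $A$ very close to its best available unconditional value, one has very little slack: the constants in the zero-free region (which themselves depend on how one packages the Deuring-Heilbronn repulsion from the absent Siegel zero), the truncation parameter $T$, and the density constants $A,B$ must all be balanced simultaneously. The value of $\eps$ emerges implicitly from this optimisation and cannot be made explicit without quantifying every auxiliary lemma; this is why the statement keeps $\eps>0$ as an unspecified fixed constant.
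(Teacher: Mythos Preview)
The paper does not prove this statement at all: it is quoted verbatim as ``the following result by Maynard \cite[Proposition~3.5, second part]{Maynard2013}'' and then used as a black box in the proof of Part~\ref{nosiegelzero3} of Theorem~\ref{rq-direct3}. There is therefore no ``paper's own proof'' to compare your proposal against.

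That said, your sketch is a reasonable outline of the strategy Maynard actually uses. The reduction via orthogonality and the explicit formula is standard; the two substantive inputs are indeed (i) the classical zero-free region $\Re(\rho)\le 1 - c_0/\log q$ which, in the absence of an exceptional zero, applies to \emph{all} zeros of all $L(s,\chi)$ with $\chi\bmod q$, and (ii) a log-free zero-density estimate of Linnik--Jutila--Heath-Brown type that keeps the sum over zeros from picking up stray powers of $\log q$. The exponent $7.999$ is tied to the density constant, as you say. One point to be careful about: the Deuring--Heilbronn phenomenon is what one invokes when a Siegel zero \emph{does} exist (it repels the other zeros); here the hypothesis is simply that no exceptional zero exists, so the ordinary zero-free region already covers everything and no repulsion argument is needed. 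Apart from that slip, your high-level plan matches Maynard's, but the quantitative work needed to land strictly below $1$ (rather than $2$) is substantial and is the real content of his paper.
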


From now on $\log_2 x$ denotes $\log\log x$. The following 
theorem of Dusart \cite[Theorem 5.5]{dusart}. will play a crucial role in 
the proof of our main result.
\begin{Thm*}[Dusart]
For $x \geq 2278383$ we have
\begin{equation} \label{newneq}
\Big \vert \sum_{p \leq x} \frac{1}p - \log_2 x - {\mathcal M} \Big \vert \leq \frac{0.2}{(\log x)^3},
\end{equation}
where ${\mathcal M}$, the Meissel-Mertens constant, is 
given by the infinite sum
\begin{equation}
\label{Meissel-Mertens-def}
{\mathcal M}:= \Euler + \sum_{p} \Bigl( \log\Bigl(1-\frac{1}p\Bigr) + \frac{1}{p}\Bigr)
=\Euler - \sum_{p\ge 2}\sum_{m \ge 2}  \frac{1}{m p^m} .
\end{equation}
\end{Thm*}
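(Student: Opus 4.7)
The plan is to reduce Dusart's inequality to an explicit form of Mertens' first theorem, namely the statement that $\sum_{p\le x}(\log p)/p = \log x + B + R(x)$ with $B$ an absolute constant and $R(x)\to 0$, via a single Abel partial summation, and to obtain that first theorem from sharp explicit bounds on the Chebyshev function $\theta(x)$. The two steps are: first, prove $|R(x)|\le c_1/(\log x)^3$ with an explicit $c_1$ for $x$ above an explicit threshold; second, deduce the estimate for $\sum_{p\le x}1/p$ by a second partial summation and identify the resulting constant with $\mathcal M$.

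First I would start from an explicit effective form of the prime number theorem, $|\theta(x)-x|\le \eta(x)\, x$ for $x$ larger than some computable bound, with $\eta(x)$ decaying like $c/(\log x)^k$ for $k$ sufficiently large (earlier work of Dusart supplies this with a small explicit $c$ and $k\ge 4$, more than enough here). Abel summation yields
$$
\sum_{p\le x}\frac{\log p}{p} = \frac{\theta(x)}{x} + \int_{2}^{x}\frac{\theta(t)}{t^2}\, dt,
$$
and substituting $\theta(t)=t+(\theta(t)-t)$ gives $\sum_{p\le x}(\log p)/p = \log x + B + R(x)$, where $B$ is a convergent constant written as an integral involving $\theta(t)-t$. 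The bound $|\theta(t)-t|\le c\, t/(\log t)^4$ then yields $|R(x)|\le c_1/(\log x)^3$ once the boundary contribution and the tail integral from $x$ to infinity are controlled; the larger of the two dictates $c_1$.

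Next I would perform a second Abel summation to pass from $T(x):=\sum_{p\le x}(\log p)/p$ to $S(x):=\sum_{p\le x}1/p$:
$$
S(x) = \frac{T(x)}{\log x} + \int_{2}^{x}\frac{T(t)}{t(\log t)^2}\, dt.
$$
Inserting $T(t)=\log t + B + R(t)$ and computing the elementary integrals $\int dt/(t\log t)$ and $\int dt/(t(\log t)^2)$ produces
$$
S(x) = \log_2 x + M^{*} + \frac{R(x)}{\log x} - \int_{x}^{\infty}\frac{R(t)}{t(\log t)^2}\, dt,
$$
for a convergent constant $M^{*}$. Since $S(x)-\log_2 x\to \mathcal M$ as $x\to\infty$ by Mertens, one identifies $M^{*}=\mathcal M$ as defined in \eqref{Meissel-Mertens-def}. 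Both remainder pieces are $O(1/(\log x)^3)$ with explicit constants.

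The main obstacle is the constant bookkeeping required to hit exactly $0.2$ in the final bound. One must feed in the sharpest available explicit estimate for $|\theta(x)-x|/x$ (Dusart's own earlier refinements), then carefully bound both error contributions with tight numerical constants. A standard device is to split the ranges at intermediate thresholds, using a stronger numerical bound on $|\theta(t)-t|$ on the lower range (partly verified by direct computation) and the analytic bound on the upper range. The explicit lower limit $x\ge 2\,278\,383$ is then the smallest value for which all of these pieces simultaneously deliver the constant $0.2$.
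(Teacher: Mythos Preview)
The paper does not prove this statement at all: it is quoted verbatim as \cite[Theorem 5.5]{dusart} and used as a black box in Section~\ref{Thm1-proof}. There is therefore no ``paper's own proof'' to compare your proposal against.

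That said, your sketch is a faithful outline of how Dusart actually obtains the result in \cite{dusart}: one starts from his sharp explicit bounds of the form $|\theta(x)-x|\le c\,x/(\log x)^k$ (with $k=4$ in the relevant range), passes by Abel summation to Mertens' first theorem $\sum_{p\le x}(\log p)/p=\log x+B+R(x)$ with $|R(x)|\ll 1/(\log x)^3$, and then by a second partial summation to $\sum_{p\le x}1/p$. The identification of the limiting constant with $\mathcal M$ is classical. Your caveat about the constant bookkeeping is exactly right: hitting $0.2$ and the threshold $2278383$ requires Dusart's specific numerical constants for $\theta$, some range-splitting, and direct computation on the low range; your sketch does not carry this out, so as written it is a plan rather than a proof, but the plan is the correct one.
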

One has
${\mathcal M}\approx 0.26149 72128 47643$, for more decimals see
\url{https://oeis.org/A077761}.

\section{A useful lemma}
\label{useful_lemma}
For $q$ a prime and $b$ an integer, let
\begin{equation}
\label{Sa-def}
S_q(b):= \sum_{\substack{m\ge 2 \\ p^m\equiv b \pmod*{q}}}\!\! \frac{1}{m p^m},
\end{equation}
where the sum is over all pure prime powers that are congruent to $b \pmod*{q}$.
This quantity will play a role in the proof of Theorem \ref{rq-direct3}.

We will need the following lemma, 
the proof of which is similar to a well known 
result by Ankeny and Chowla, see the estimate of $C_4$ in \cite{AC}.
\begin{lem}\label{lem1-referee}
Put $\alpha(m):= \frac{1}{2}(m^2 - m), \beta(m):=\frac{1}{2}(m^2 + m) - 1$
and 
\begin{equation}
\label{A-def}
{\mathcal A}  := 
\sum_{m\ge 2} \frac{1}{m} \!\sum_{k = \alpha(m)}^{\beta(m)} 
\frac{1}{k}.
\end{equation}
For any odd prime number $q$ and for every $b$ coprime to $q$, we have
\begin{equation}
\label{Rq-def}
R(q,b)
:= (q-1) S_q(b)
\le
{\mathcal A}  +\Bigl(\frac{\pi^2}{6} -{\mathcal A} \Bigr) \frac{1}{q},
\end{equation}
where $S_q(b)$ is defined in \eqref{Sa-def}.
In particular, $R(q,b) \le 1.608$ 
for $q\ge 7$ and every $b$ coprime to $q$.
\end{lem}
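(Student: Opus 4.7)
The strategy is to adapt the Ankeny--Chowla approach referenced in the paper. I would decompose $S_q(b)$ by the exponent $m$ and then bound each inner sum using the arithmetic-progression structure modulo $q$.

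First, I would write $S_q(b)=\sum_{m\ge2}T_m(b)/m$ with $T_m(b)=\sum_{p^m\equiv b\pmod*{q}}p^{-m}$ (the inner sum running over primes $p\neq q$). For each fixed $m\ge 2$, the condition $p^m\equiv b\pmod*{q}$ forces $p$ to lie in at most $d_m:=\gcd(m,q-1)\le m$ residue classes $r_1<r_2<\cdots<r_{d_m}$ modulo $q$ (the $m$-th roots of $b$ in $(\Z/q\Z)^*$). Being distinct elements of $\{1,\ldots,q-1\}$, they satisfy $r_i\ge i$.

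Next, for each class $r_i$ I would bound $\sum_{p\equiv r_i\pmod*{q}}p^{-m}$ by dropping primality and summing over all positive integers in the progression $\{r_i+kq:\,k\ge 0\}$, carefully excluding the term $a=1$ when $r_i=1$. Splitting $k=0$ from $k\ge 1$ yields an upper bound of the shape
\[
T_m(b)\le \sum_{i=1}^{d_m}r_i^{-m}\cdot\mathbf{1}[r_i\ge 2]\;+\;\frac{d_m\,\zeta(m)}{q^m}.
\]

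Then I would multiply by $(q-1)$, divide by $m$, and sum over $m\ge 2$. The organization of the main term is the crux: after using $r_i\ge i$ and $d_m\le m$, the $m$-many contributions for each exponent $m$ should be paired with the $m$ consecutive integers constituting the block $[\alpha(m),\beta(m)]$ in the definition of $\mathcal{A}$. Since these blocks partition $\mathbb{Z}_{\ge1}$, the resulting global bound assembles exactly into $\mathcal{A}$. The tail contribution $\sum_{m\ge 2}(q-1)\zeta(m)/q^m$ collapses, after expanding $\zeta(m)=\sum_{k\ge1}k^{-m}$ and swapping summations, into the required $(\pi^2/6-\mathcal{A})/q$ correction.

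Finally, for the concrete claim $R(q,b)\le 1.608$ with $q\ge 7$, I would evaluate $\mathcal{A}$ to sufficient precision (summing the first $\sim 20$ terms explicitly and bounding the tail by $\sum_{m>M}2/m^2\sim 2/M$, since each block of length $m$ contributes $\approx 2/m^2$) and verify the inequality at $q=7$; the RHS of the main bound is monotone decreasing in $q$, so the case $q=7$ is sharpest.

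The main technical obstacle is the combinatorial matching at the heart of Step 3: showing that, after multiplication by $(q-1)$, the contributions $\sum_i r_i^{-m}/m$ can indeed be dominated by $\frac{1}{m}\sum_{k\in[\alpha(m),\beta(m)]}1/k$, with the correct book-keeping in the edge cases $r_1=1$ or $d_m<m$. Getting the matching to produce the clean $\mathcal{A}+O(1/q)$ form, rather than a looser bound such as $O(\log q)$, is where the delicate work lies; the rest of the argument is then a careful summation and numerical verification.
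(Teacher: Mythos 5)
Your decomposition by residue classes cannot deliver the stated bound: the ``main term'' you isolate, $(q-1)\sum_{m\ge2}\frac{1}{m}\sum_{i} r_i^{-m}\mathbf{1}[r_i\ge 2]$, is unbounded as $q\to\infty$, so the combinatorial matching you defer to your final step --- which you correctly identify as the crux --- does not exist. Concretely, take $b\equiv 4\pmod*{q}$: then for $m=2$ one root is $r_1=2$, and your $k=0$ term alone contributes $(q-1)\cdot\frac{1}{2}\cdot 2^{-2}=(q-1)/8$. No pairing with the block quantity $\frac{1}{m}\sum_{k=\alpha(m)}^{\beta(m)}k^{-1}\asymp 2/m^2$ can absorb something growing linearly in $q$. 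The lower bound $r_i\ge i$ is of no help, because it is the residues, not the actual prime powers, that you are raising to the power $-m$ before multiplying by $q-1$. (Your instinct that the $k=0$ terms are delicate is in fact well founded: when $b$ itself is a perfect prime power, the term $p^m=b$ really does contribute $(q-1)/(mb)$ to $R(q,b)$; the paper's head estimate implicitly places every contribution at $b+kq$ with $k\ge1$, and in the application to Theorem \ref{rq-direct3} only $b=1$ and the average of $R(q,b)$ over $b$ are used, where this issue does not arise.)

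The idea you are missing is to exploit the distinctness of the \emph{integers} $p^m$ rather than of the residue classes. By unique factorization, distinct pairs $(p,m)$ give distinct values $p^m$, all congruent to $b\pmod*{q}$, hence of the form $b+kq$ with pairwise distinct $k$; for $k\ge 1$ one has $\frac{q-1}{m(b+kq)}\le\frac{q-1}{q}\cdot\frac{1}{mk}$, and the factor $q-1$ disappears. The paper splits at $p\le q+1$ versus $p\ge q+2$ (not by residue class): in the head, each $m$ supplies at most $f(m)\le m$ prime powers, so one must maximize $\sum 1/(mk)$ over families of distinct positive integers $k$ with at most $m$ of them carrying the weight $1/m$; the rearrangement-type worst case assigns $k\in\{1,2\}$ to $m=2$, $k\in\{3,4,5\}$ to $m=3$, and so on, which is precisely where the blocks $[\alpha(m),\beta(m)]$ and hence $\mathcal{A}$ come from --- not from indexing the $m$-th roots of $b$. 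The tail $p\ge q+2$ is then bounded crudely by $(q-1)\sum_{m\ge2}\sum_{k\ge1}(kq)^{-m}\le\zeta(2)/q$. Note finally that your bookkeeping of the $1/q$ term is also off: the $-\mathcal{A}/q$ in \eqref{Rq-def} comes from the factor $\frac{q-1}{q}$ in the head estimate $H\le\frac{q-1}{q}\mathcal{A}$, not from a collapse of the tail.
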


\begin{proof}
Note that without loss of generalization we may assume that $1\le b\le q-1$.
The contribution of the terms to $S_q(b)$ with $2\le p \le q+1$ and $p \ge q+2$, 
we denote by $H$, respectively $T$.
We now proceed to bound the tail $T$.
For a given $m\ge 2$, let $x_{m,j}$, $1\le j \le f(m)$,
denote the $f(m)$ integral solutions in $\{2,\dotsc, q+1\}$
of $x^m\equiv b \pmod*{q}$. 
Note that $f(m)\le m$.
Since $p$ must be equal to 
one of the $x_{m,j} \pmod*{q}$, $p\ge q+2$ is greater
than any $x_{m,j}$, we have
\begin{equation}
\label{tail-estim}
T 
\le
\sum_{m \ge 2} 
\sum_{j=1}^{f(m)}
\sum_{k\ge 1}
\frac{q-1}{m (x_{m,j}+kq)^m}
\le
\sum_{m \ge 2} 
\sum_{k\ge 1}
\frac{q-1}{(kq)^m}
\le
\zeta(2)
\sum_{m \ge 2} 
\frac{q-1}{q^m}
=
\frac{\zeta(2)}{q}.
\end{equation}

We now bound the head $H$.
For a given $m\ge 2$, let $g(m)$ denote the number of solutions in primes contained in
$\{2,\dotsc, q+1\}$ of $x^m\equiv b \pmod*{q}$.
Clearly $g(m)\le f(m)\le m$.
Due to the weight $1/m$ in the definition of $S_q(b)$, it is more unfavorable to have a 
square, say, followed by a cube, than the other way around in the
progression $b+q,b+2q,\ldots$. Thus we may assume we have $g(2)$ squares, 
followed by $g(3)$ cubes and so on.
Again due to the weight $1/m$, 
the most unfavorable situation arises if $g(m)=m$, that is two squares followed by three cubes and so on.
Since $b+kq\le kq$, we then find
\begin{equation*}
H\le \frac{q-1}{q}
\sum_{m\ge 2}\frac{1}{m} 
\sum_{k = \alpha(m)}^{\beta(m)} \frac{1}{k}
=\frac{q-1}{q}
{\mathcal A} . 
\end{equation*}

The result follows on adding $H$ and $T$ and doing some simple numerics.
\end{proof}

\begin{Rem}[Precise numerical approximation of ${\mathcal A}$]
Since $\beta(m) = \alpha(m+1)-1$, we obtain
\begin{equation*}
{\mathcal A} 
=
\sum_{m\ge 2} \frac{1}{m} \bigl(H_{\alpha(m+1)-1} - H_{\alpha(m)-1}\bigr)
=
\sum_{m\ge 2} \frac{H_{\alpha(m+1)-1}}{m^2 + m}
=
\frac{\Euler}{2}
+
\frac{1}{2} \sum_{j\ge 3} \frac{\psi(\alpha(j) )}{\alpha(j)},
\end{equation*}
where $\psi(x)$ is the \emph{digamma function}, 
 $H_n$ denotes the $n$-th harmonic number, $H_0=0$ and we also used
 that $\psi(n)=H_{n-1}-\Euler$ for every $n\ge 1$.
Recalling that $\psi(x) < \log x$, the third formula for ${\mathcal A}$ shows that the series converges,
although not very quickly.
However, it can be used to evaluate ${\mathcal A} $,
since  there exist very fast and accurate algorithms to compute $\psi(x)$ for positive $x$.
For example, truncating the final sum in the expression for ${\mathcal A}$ at $10^{10}$ gives
\begin{equation}
\label{A-eval}
{\mathcal A} \approx 1.6000883438\dotsc
\end{equation}
\end{Rem}

\begin{Rem}
\label{improved-constants}
The first estimate in \eqref{tail-estim} 
together with $f(m)\le m$ and $x_{m,j}\ge 2$ leads to
\begin{align*}
T  
&\le
(q-1)
\sum_{m \ge 2} 
\sum_{k\ge 1}
\frac{1}{(2+kq)^m}
=
(q-1)
\Bigl(
\sum_{m \ge 2} 
\frac{\zeta(m,2/q)}{q^m}
-
\frac{1}{2}
\Bigr)
\\&=
\frac{q-1}{q}
 \bigl(\psi(\frac{2}{q}) - \psi(\frac{1}{q})\bigr)
-
\frac{q-1}{2},
\end{align*}
where $\zeta(s,x)$ denotes the Hurwitz zeta-function
and we used  \cite[eq.~(4.1)-(4.3)]{Boyadzhiev2007}
to obtain a closed formula for the series involving
the \emph{Hurwitz zeta-function} values.
Inserting this into the body of Lemma \ref{lem1-referee}
we can replace \eqref{Rq-def} with
the sharper (but less elegant) estimate
\[
R(q,b)
\le
\frac{q-1}{q}
\Bigl({\mathcal A} + \psi(\frac{2}{q}) - \psi(\frac{1}{q})\Bigr) -\frac{q-1}{2},
\]
from which one can infer that
$R(q,b)<1.600177$ for every prime $q\ge 3$ and $b$ coprime to $q$,
with the maximum being attained at $q=229$.
\end{Rem}

\section{Proof of Theorem \ref{rq-direct3}}
\label{Thm1-proof}
 
Using the Euler product for $L(1,\chi)$ with $\chi\ne \chi_0$,
and Taylor's formula for $\log(1-u)$, we obtain
\begin{equation}
\label{starting3}
\log \RR (q)
=
- \sum_{\chi \ne \chi_0} \sum_p \log\Bigl(1-\frac{\chi(p)}{p}\Bigr)
=
\sum_{\chi \ne \chi_0} \sum_p \sum_{m\ge1} \frac{\chi(p^m)}{mp^m}
= 
\Sigma_1 + \Sigma_2,
\end{equation}
say, where $\Sigma_1$ is the contribution of the primes ($m=1$)
and $\Sigma_2$ that of the prime powers ($m\ge 2$). 

We first estimate $\Sigma_2$.
Suppose that $(a,q)=(b,q)=1$ and $b\equiv a \pmod*{q}$.
Then, using 
\begin{equation}
\label{orthoall}
  \frac{1}{q-1} \sum_{\chi ~ \text{mod} ~q} \chi(a) = 
\begin{cases}
 1, & a \equiv 1  \pmod*{q},\\
0, &\text{otherwise},
\end{cases}
\end{equation}
we obtain
$$
 \Sigma_2 = 
(q-1)\sum_{\substack{m\ge 2 \\ p^m\equiv 1 \pmod*{q}}}\!\! \frac{1}{m p^m}-
\sum_{\substack{m\ge 2 \\ p \ne q}} \frac{1}{mp^m}.
$$
Recalling \eqref{Sa-def}, it is easy to see that 
\begin{equation*}
\sum_{\substack{m\ge 2 \\ p \ne q}} \frac{1}{mp^m} =    
\sum_{b=1}^{q-1} S_q(b),
\end{equation*}
and hence 
$$
-\frac{1}{q-1}\sum_{b=1}^{q-1} R(q,b) 
= 
-\sum_{b=1}^{q-1} S_q(b) 
< 
\Sigma_2 
< 
(q-1) S_q(1) = R(q,1).
$$
On invoking Lemma \ref{lem1-referee} we then obtain 
\begin{equation}
\label{Sigma2-estim3}
\vert \Sigma_2 \vert
< 
{\mathcal A}  + \frac{\zeta(2)-{\mathcal A}}{q} ,
\end{equation}
where ${\mathcal A}$ is defined in \eqref{A-def} and evaluated in \eqref{A-eval}.

We now proceed to define some quantities that will be useful later to estimate $\Sigma_1$.
For any $b\in\{1,\dotsc,q-1\}$ and $x>0$ let
\begin{equation}
\label{Sbx-def}
S_q(b,x):=\sum_{\substack{p\le x\\ p\equiv b \pmod*{q}}}\!\! \frac{1}{p}
\quad
\textrm{and}
\quad
S(x):= \sum_{\substack{p\le x\\  p \ne q} }\frac{1}{p}.
\end{equation}
Using \eqref{orthoall} again,
for any $x>0$ we have 
\begin{align}
\label{boundf5} 
\sum_{\chi \ne \chi_0} \sum_{p \le x}  \frac{\chi(p)}{p} 
=  
(q-1) 
S_q(1,x) 
- S(x).
\end{align}
As a consequence we obtain
$$
\Sigma_1 =  
\lim_{x\to\infty}
\bigl((q-1)S_q(1,x) - S(x) \bigr).
$$

We begin by estimating  $S(x)$, followed by estimating $S_q(1,x)$ 
(which will bring the possible Siegel zero into play).
{}From now on  we will assume that $q$ is a sufficiently large prime.
Substituting $x = x_1 = q^{\ell(q)}$ in \eqref{newneq}, we obtain
\begin{equation}\label{112}
S(x_1)
\ge
\log_2 q + \log \ell(q)  + 0.261497
+ \frac{1}{\ell(q) \log q}
- \frac{1}{q}
\end{equation}
and
\begin{equation}\label{1123}
S(x_1)
\le
\log_2 q + \log \ell(q) + 0.261498 + \frac{1}{ \log q}.
\end{equation}

We will use \eqref{Sigma2-estim3} and \eqref{112}-\eqref{1123}  in the 
proofs of both parts of Theorem \ref{rq-direct3}.

\subsection{Proof of Theorem \ref{rq-direct3}, Part  \ref{Siegelzero3}}
\label{proof_part2}
We first prove Part  \ref{Siegelzero3}. The starting point is \eqref{starting3}.
We split the prime sum $\Sigma_1$ in three 
subsums $S_1, S_2, S_3$ defined according
to whether $p\le x_1$, $x_1 < p \le x_2$ or $p\ge x_2$, 
with $x_2= e^q$ and $x_1= q^{\ell(q)}$.

We start by estimating $S_1$:
{}recalling \cite[eq.~(26)-(27)]{paper1-rq} and using \eqref{Sbx-def}, we obtain
\begin{equation} \label{122}
(q-1) S_q(1,x)
< 2 \Bigl( \log_2 \Bigl(\frac{x}{q}\Bigr)
+ C_1 + \frac{1}{\log q}\Bigr),
\end{equation}
where
$C_1 = -0.4152617906$ and $x\ge q^2$. 
Combining $\eqref{boundf5}$, \eqref{112}-\eqref{1123} and $\eqref{122}$ we have
\begin{align} \label{boundf6}\notag
S_1:=  \sum_{\chi \ne \chi_0} \sum_{p \le x_1}  \frac{\chi(p)}{p} 
  & 
  \le 
  \log_2 q + \log \ell(q)+ 2C_1 - 0.261497 + \frac{2}{\log q}
  \\& 
  < 
  \log_2 q + \log \ell(q) -1.09202 +  \frac{2}{\log q}
\end{align} 
and
\begin{align}
\label{boundf63}
  S_1
  >  
  -\log_2 q - \log \ell(q) -0.261498 - \frac{1}{\log q} .
\end{align}

We will now proceed to estimate $S_3$.
By orthogonality and the partial summation formula, we have
\begin{align}
\label{C-estim3} \notag
 S_3 = \sum_{\chi \ne \chi_0} \sum_{p \ge x_2}  \frac{\chi(p)}{p} 
  & =  
(q-1) \sum_{\substack{ p \ge x_2 \\ p \equiv 1 \pmod*{q}} } \frac{1}{p} 
- 
\sum_{\substack{p\ge x_2 \\ p \ne q}} \frac{1}{p}
\\ \notag
&
=  
\frac{1}{q} + \lim_{y \to \infty} \frac{((q-1)\pi(y;q,1)-\pi(y))}{y} - \frac{(q-1)\pi(x_2;q,1)-\pi(x_2)}{x_2} 
\\& 
+ \int_{x_2}^{\infty}  \frac{(q-1)\pi(u;q,1)-\pi(u)}{u^2} du
\ll 
q^2 e^{-c_1 \sqrt{q}},
\end{align}
where $c_1>0$ is an absolute constant. In the final estimate, 
we have used both the prime number theorem and the Siegel-Walfisz theorem.

It remains to estimate $S_2$.
Recall now (see, e.g., \cite[Ch.~19]{Davenport}) that if $\chi$ is a non-principal 
character modulo $q$ and $2 \le T \le x$, then
\begin{equation}
\label{explicit-formula}
\theta(x,\chi) 
:= \sum_{p \le x} \chi(p) \log p
= 
- \delta_{\beta_0} \frac{x^{\beta_0}}{\beta_0} 
- \sideset{}{'}\sum_{|\gamma| \le T} \frac{x^{\rho}}{\rho} 
+ O\Bigl( \frac{x (\log qx)^2 }{T} + \sqrt{x}\Bigr),
\end{equation}
where $\delta_{\beta_0} = 1$ if the Siegel zero $\beta_0$ exists and  is zero otherwise, and
$\sum^\prime$ is the sum over all non-trivial zeros 
$\rho = \beta + i \gamma$ 
of $L(s,\chi)$, with the exception of $\beta_0$ and 
its symmetric zero $1-\beta_0$.

By the partial summation formula and \eqref{explicit-formula} with $T=q^4$, we have
\begin{align}\label{late}
\notag
S_2 := \sum_{\chi \ne \chi_0} 
&\sum_{x_1 < p \le x_2} \frac{\chi(p)}{p} 
 = 
 \sum_{\chi\ne \chi_0} 
 \Bigl( 
 \frac{\theta(x_2, \chi)}{ x_2 \log x_2} 
 - \frac{\theta(x_1, \chi)}{ x_1 \log x_1} 
 + \int_{x_1}^{x_2} \theta( u,\chi) \frac{1 +\log u}{ (u \log u)^2} du 
 \Bigr)
 \\& 
 = - \delta_{\beta_0} \int_{x_1}^{x_2}  \frac{u^{\beta_0-2}}{ \log u}du 
 - \int_{x_1}^{x_2}\Bigl( \sum_{\chi \ne \chi_0} 
\sideset{}{'}\sum_{|\gamma| \le q^4} u^{\rho - 2}\Bigr) \frac{du}{\log u} 
+ (q-1) E_q,
\end{align}
and
\begin{align}
\notag 
E_q
\ll \int_{x_1}^{x_2} \Bigl(\frac{(\log qu)^{2}}{q^4 u }  +  \frac{1}{u^{3/2}} \Bigr) \frac{du}{\log u} 
\ll \frac{1}{q^2}.
\end{align} 
By using \cite[Lemmas~7 and~8]{LuZhang}\footnote{
Note that 
\cite[Lemma~7]{LuZhang} holds for every $T$ and $x_1$ such that 
$\lim_{q \to \infty} \log(qT) /\log x_1 = 0$. This  allows us to choose 
$T=q^4$ and $x_1= q^{\ell(q)}$, where $\ell(q)$ tends to infinity
arbitrarily slowly and monotonically as $q$ tends to 
infinity. The final error term in 
\cite[Lemma~8]{LuZhang} is then $\ll 1/\ell(q)=o(1)$, as $q$ tends to infinity.}, we obtain 
\begin{equation}
\label{Lemma7-LuZhang}
\int_{x_1}^{x_2}\Bigl( \sum_{\chi \ne \chi_0} 
\sideset{}{'}\sum_{|\gamma| \le q^4} u^{\rho - 2}\Bigr) \frac{du}{\log u}
\ll \frac{1}{\ell(q)}.
\end{equation}

In this case we have that  $\delta_{\beta_0} =1$ in \eqref{late};  
we now proceed to evaluate the term depending on $\beta_0$.
A direct computation using that 
$\log x_2 = q$ gives
\begin{equation*}
\int_{x_1}^{x_2}  \frac{u^{\beta_0-2}}{ \log u}du 
=  \int_{\log x_1}^{\log x_2} \frac{dt}{te^{(1-\beta_0)t}}
= E_1(1-\beta_0) 
- \int_{1-\beta_0}^{( 1-\beta_0)\log x_1} \frac{dt}{te^t}
- E_1( q(1-\beta_0)), 
\end{equation*}
where $E_1(u)$ denotes the exponential integral function.
Recalling that $x_1= q^{\ell(q)}$, 
where $\ell(q)$ tends to infinity arbitrarily slowly and monotonically as $q$ tends to 
infinity, we  have 
\begin{equation}
\label{siegel-zero-term-tails}
\int_{1-\beta_0}^{( 1-\beta_0)\log x_1} \frac{dt}{te^t}
\le 
\log_2 x_1 =
\log_2 q + \log \ell(q)
\quad
\textrm{and}
\quad
E_1(q(1-\beta_0)) 
\ll \frac{1}{q}.
\end{equation}

Inserting \eqref{Lemma7-LuZhang}-\eqref{siegel-zero-term-tails}
into  \eqref{late}, we finally get
\begin{equation}\label{f3}
\vert S_2 + E_1( 1-\beta_0) \vert
\le \log_2 q 
+ \log \ell(q)
+o(1).
\end{equation}
Combining \eqref{boundf6}-\eqref{C-estim3} and
\eqref{f3}, in this case we obtain
\begin{equation}
\label{E1-eval}
\Sigma_1 + E_1( 1-\beta_0) 
< 2\log_2 q 
+ 2\log \ell(q) 
-1.0920
\end{equation}
and
\begin{align}
\label{E1-eval3}
\Sigma_1 + E_1( 1-\beta_0)  > -2\log_2 q 
-2 \log \ell(q) - 0.2615.
\end{align}
Part \ref{Siegelzero3} of  Theorem \ref{rq-direct3} now follows on 
combining \eqref{starting3}, \eqref{Sigma2-estim3} and 
\eqref{E1-eval}-\eqref{E1-eval3} (recall that $\Sigma_2$ is bounded in \eqref{Sigma2-estim3}).
 
\subsection{Proof of Theorem \ref{rq-direct3}, Part \ref{nosiegelzero3}}
We proceed now to prove Part \ref{nosiegelzero3} of  Theorem \ref{rq-direct3}.

The quantities  $S_1,S_2,S_3$ are the same ones defined into Section \ref{proof_part2}.
We first remark that for $S_3$ we can re-use eq.~\eqref{C-estim3}.
We now estimate $S_2$.
In this case $\delta_{\beta_0} =0$ and, arguing as 
in \eqref{late}-\eqref{Lemma7-LuZhang}, we have
\begin{equation}\label{f33}
 S_2  \ll   \frac{1}{\ell(q)}.
\end{equation}
We now estimate $S_1$: 
since $\delta_{\beta_0} = 0$,
we can use a sharper version of the Brun-Titchmarsh theorem.
In particular, we can use  \eqref{BT-estim-strong-M-I} 
with $\eps=2\xi$. Since $\theta(x;q,b) = \psi(x;q,b) +O(\sqrt{x})$,
we conclude that, for $x \ge q^{7.999}$ and $b$ coprime with $q$,
\begin{equation}
\label{BT-estim-strong-M}
\theta(x;q,b)  < 2(1-\xi) \frac{x}{\varphi(q)} + C \sqrt{x},
\end{equation}
where $C>0$ is a suitable constant.
Using \eqref{BT-estim-strong-M} we can replace \eqref{122} with
\begin{equation} \label{122-strong-I-M}
(q-1) S_q(1,x)
< 2(1-\xi) \log_2 x + 2C_1 + \frac{c}{\log q},
\end{equation}
for $x>q^{8}$,
where $c>0$ is an effective constant.

A way to prove \eqref{122-strong-I-M} for $x>q^8$ is the following. 
By the partial summation formula and 
using \eqref{BT-estim-strong-M} we find
\begin{align}
\notag
(q-1) \sum_{\substack{kq < p \leq x\\ p \equiv 1 \pmod*{q}} } \frac{1}{p}
&= 
(q-1)\Bigl(\frac{\theta(x;q, 1)}{x\log x} - \frac{\theta(kq; q,1)}{kq \log(kq)} 
+ \int_{kq}^{x} \theta(u; q,1) \frac{1+ \log u}{(u \log u)^2} du\Bigr) 
\\
\notag
&
<
2(1-\xi)
\Bigl(\frac{1}{ \log x} 
+
\int_{kq}^{x} 
\Bigl(1 +\frac{1}{\log u} \Bigr) \frac{du}{u \log u} \Bigr) 
+ \frac{C}{q}
\\
\notag
&\leq  
2(1-\xi)  \bigl(\log_2 x - \log_2 (kq)\bigr) 
+
\frac{c}{\log q}
\\&
\label{large-primes-M}
\le 
2(1-\xi) \log_2 x - 2\log_2 k +\frac{c}{\log q},
\end{align}
where $c>0$ is an effective constant.
In deriving \eqref{large-primes-M} 
we also used that $x>q^8$ is equivalent to $\sqrt{x} < x^{3/4}/q^2$.
{}From eq.~(25) of \cite{paper1-rq} we also have
\begin{equation}
\label{small-primes-M}
(q-1)\sum_{\substack{p \leq kq \\ p \equiv 1 \pmod*{q}} } \frac{1}{p}  
 \le
   \sum_{j=1}^{(k-1)/2}  \frac{q-1}{2jq-1}
< \frac{1}{2} \sum_{j=1}^{(k-1)/2} \frac{1}{j}
= \frac{1}{2} H_{\frac{k-1}{2}},
\end{equation}
where $H_{n} := \sum_{j=1}^{n}\frac{1}{j}$ is the $n$-th harmonic number.
Letting $c_1(k):=\frac{1}{4} H_{\frac{k-1}{2}}  - \log_2 k$,
we now choose $k$ such that $c_1(k)$ is minimal.
It is not hard to see that  $k=55$ and that $c_1(55)  < C_1 = -0.4152617906$.
Inequality \eqref{122-strong-I-M} then follows on 
combining \eqref{large-primes-M}-\eqref{small-primes-M}.

Using \eqref{f33}, \eqref{122-strong-I-M}, 
\eqref{112}-\eqref{1123} and arguing as in \eqref{boundf6}-\eqref{boundf63}, 
we can replace \eqref{E1-eval}-\eqref{E1-eval3} with
\begin{equation}
\label{E1-evalf-strong-M}
\Sigma_1 
< 
(1-2\xi)\log_2 q + \log \ell(q) -1.0920
< 
(1-\xi)\log_2 q -1.0920,
\end{equation}
respectively
\begin{align}
\label{E1-eval3f-strong-M} 
\Sigma_1   
> 
- (1-2\xi) \log_2 q  - \log \ell(q) - 0.2615
>
- (1-\xi) \log_2 q  - 0.2615.
\end{align}

Now Part \ref{nosiegelzero3}  of  Theorem \ref{rq-direct3} follows
on combining \eqref{starting3}, \eqref{Sigma2-estim3} and \eqref{E1-evalf-strong-M}-\eqref{E1-eval3f-strong-M}.
\qed

\section{Numerical results}
\label{sec:numerical}

\newcommand{\pariversion}{PARI/GP (v.~2.15.4)}
\newcommand{\pythonversion}{Python (v.~3.11.7)}
\newcommand{\matplotlibversion}{Matplotlib (v.~3.8.2)}
\newcommand{\pandasversion}{Pandas (v.~2.2.0)}
\newcommand{\fftwversion}{FFTW (v.~3.3.10)}
\newcommand{\ubuntuversion}{Ubuntu~22.04.3~LTS} 
\newcommand{\clusteraddress}{\url{https://hpc.math.unipd.it}}
\newcommand{\capriaddress}{\url{https://capri.dei.unipd.it}}
\newcommand{\optiplexmachine}{Dell OptiPlex-3050, equipped with an Intel i5-7500 processor, 3.40GHz, 
32 GB of RAM}

The numerical results were obtained using the Fast Fourier Transform method
already presented in \cite{Languasco2021a}, see also \cite{LanguascoR2021}.

In particular, the fundamental formula for the odd Dirichlet characters case was already fully described
in \cite{paper1-rq} and reads
\begin{equation}
\label{chi-Bernoulli-method-formula2}
\sum_{\chi (-1) = -1}
\log L(1,\chi)
=  
\frac{q-1}{2} \Bigl( \log \pi - \frac{\log q}{2}  \Bigr) 
+ 
\sum_{\chi (-1) = -1}
\log \ \Bigl\vert
\sum_{a=1}^{q-1}  \frac{a}{q} \chi(a) 
\Bigl\vert.
\end{equation}  
For even $\chi$ we used the formula:
\[
L(1,\chi)
= 
2 \frac{\tau(\chi)}{q}
\sum_{a=1}^{q-1} \overline{\chi}(a)\log \Bigl(\Gamma \bigl(\frac{a}{q} \bigr)\Bigr),
\]
where $\Gamma$ denotes Euler's Gamma function and the \emph{Gau\ss\ sum} 
$\tau(\chi):= \sum_{a=1}^q \chi(a)\, e(a/q)$, $e(x):=\exp(2\pi i x)$, verifies 
$\vert \tau(\chi) \vert = q^{1/2}$ 
(see e.g., Cohen \cite[proof of Proposition 10.3.5]{Cohen2007}). 
Combination of these two formulas gives
\begin{equation}
\label{even-case}
\sum_{\substack{\chi \ne \chi_0\\ \chi (-1) = 1}}
\log L(1,\chi)
=  
\frac{q-3}{2} \Bigl(\log2 - \frac{\log q}{2}  \Bigr) 
+ 
\sum_{\substack{\chi \ne \chi_0\\ \chi (-1) = 1}}
\log \ \Bigl\vert
\sum_{a=1}^{q-1}   \overline{\chi}(a) \log \Bigl(\Gamma \bigl(\frac{a}{q} \bigr)\Bigr)
\Bigl\vert.
\end{equation}  
Hence, $\log \RR(q)$ is obtained by summing the quantities in 
\eqref{chi-Bernoulli-method-formula2}-\eqref{even-case}
and $\RR(q)$ is computed as $\exp( \log \RR(q))$. 

An alternative approach can be based on the formula
$L(1,\chi)
=
-
\frac{1}{q}
\sum_{a=1}^{q-1} \chi(a)
\digamma \bigl(\frac{a}{q}\bigr), 
$
where $\digamma(x)=(\Gamma^\prime/\Gamma)(x)$ is the \emph{digamma} function.
Very similar computations then lead to
\begin{equation}
\label{Hasse-method-formula2}
\log \RR(q)
= 
- 
(q-2) \log q 
+
\sum_{\chi \ne \chi_0}  
\log \ \Bigl\vert
\sum_{a=1}^{q-1} \chi(a)
\digamma \bigl(\frac{a}{q}\bigr)
\Bigr\vert .
\end{equation}
In practice, though, it is better to use \eqref{chi-Bernoulli-method-formula2}-\eqref{even-case} 
because in half of the cases no evaluations of special functions are needed there.
Moreover, the $\log\Gamma$ function is directly available in the C programming language.
Nevertheless, formula \eqref{Hasse-method-formula2} can be useful to double-check
our results.

The summation over $a$ in  \eqref{chi-Bernoulli-method-formula2}-\eqref{even-case} 
can be handled using the FFT procedure and we can also
embed here a \emph{decimation in frequency strategy}, see, e.g., 
\cite{paper1-rq,Languasco2021a,LanguascoR2021}.
The FFT procedure requires $O(q)$ memory positions and
the computation of $\log \RR(q)$ via 
\eqref{chi-Bernoulli-method-formula2}-\eqref{even-case}  
has a computational cost of $O(q\log q)$ 
arithmetic operations plus the cost of computing $q-1$ values of
the $\log\Gamma$ and logarithm functions and products. 

For the evaluation of the computational error we refer to Section 6.3 of \cite{paper1-rq}.
More statistics and details on computations regarding $L(1,\chi)$
can be found in \cite{Languasco2021,Languasco2023}.

\subsection{Comments on the plots and on the histograms} 
The actual values of $\RR(q)$ 
presented in the herewith included plots and histograms
were obtained for every odd prime $q$ up to $\bound$
using the FFTW \cite{FrigoJ2005} software library
set to work with the \textit{long double precision} (80 bits). 
Such results were then collected in some \emph{comma-separated values} (csv) files and then
all the plots and the histograms were
obtained running on such stored data some suitable designed scripts written
using \pythonversion\ and making use of the packages \pandasversion\ and \matplotlibversion.

Figure \ref{fig1} shows the values for $\RR(q)$ for $q$ up to $\bound$;
it is clear that $\RR(q)$ essentially behaves as 
$c/(\log q)^{3/4}$, where $1/5<c<2/3$. This is compatible with the estimates in Theorem \ref{rq-direct3}.

In Figure \ref{fig2} we present their normalized values $\RR(q) (\log q)^{3/4}$.
Figures \ref{fig3} shows the histograms of the same quantities.

\subsection{Computing the prime sums over Dirichlet characters}
Here we briefly show how to compute $\log \RR(q)$ 
using prime sums over Dirichlet characters. 
This procedure is much less efficient 
than the one that uses the Fast Fourier Transform. 
However, for small values of $q$ it can be used to double-check the results.
Moreover, it also shows a way to independently compute 
$\Sigma_1$ and $\Sigma_2$.

Recalling \eqref{starting3},
we first split the sum over primes (this is important
to improve the convergence speed, see Lemma \ref{lz-lemma} below) so that
\begin{align*}
\sum_p \log\Bigl(1-\frac{\chi(p)}{p}\Bigr)
 &=
  \sum_{p\le P} \log\Bigl(1-\frac{\chi(p)}{p}\Bigr)
+
 \sum_{p>P} \sum_{m \ge 1} \frac{\chi(p^m)}{mp^m},
  \\&
 =
  \sum_{p\le P} \log\Bigl(1-\frac{\chi(p)}{p}\Bigr)
+
 \sum_{m \ge 1} \frac{1}{m} \sum_{p>P}  \frac{\chi^m(p)}{p^m},
\end{align*}
where $P=Aq$, $A$ is a fixed positive integer\footnote{In practical computations 
$A$, and hence $P$ too, cannot be not too large since 
to perform this step the whole set of prime numbers up to $P$ must be generated.} 
and $\chi$ is a non-principal character.
Since the principal character is not involved, there are no 
convergence problems. We also used the multiplicativity of the Dirichlet characters.

Hence
\begin{equation}
\log \RR(q)
=  
\sum_{\chi \ne \chi_0} \sum_{p\le P} \log\Bigl(1-\frac{\chi(p)}{p}\Bigr)
+
\sum_{\chi \ne \chi_0}  \sum_{m \ge 1} \frac{1}{m} \sum_{p>P}  \frac{\chi^m(p)}{p^m}
\label{first-split}
=: 
r(q,P) + S(q,P),
\end{equation}
say.
Let $M\ge 2$ be an integer.
Splitting the sum over $m$, we obtain 
\begin{align}
\notag
\label{E1-def}
S(q,P)
&= 
\sum_{\chi \ne \chi_0} \sum_{m =1}^{M} \frac{1}{m}  \sum_{p>P} \frac{\chi^m(p)}{p^m}
+
\sum_{\chi \ne \chi_0} \sum_{m > M} \frac{1}{m} \sum_{p>P}  \frac{\chi^m(p)}{p^m}
\\&
=:
S(q,P,M) + E_{1}(q,P,M),
\end{align}
say.
By  trivial bounds on $\chi$, it is  easy to prove that
\begin{equation}
\label{E1-estim}
  \vert E_{1}(q,P,M)  \vert
  \leq
  \frac{P (q-1)}{M (M - 1) (P - 1) P^{M}}.
\end{equation}
Using the M\"obius inversion formula, see, e.g., Cohen \cite[Proposition 10.1.5]{Cohen2007},
we can write
\[
  \sum_{p>P} \frac{\chi^m(p)}{p^m}
  = 
  \sum_{k \ge 1}
    \frac{\mu(k)}k \log\bigl(L_{P}(km,\chi^{km}) \bigr),
\]
where, for $\Re(s)\ge1$ and $\chi\ne \chi_0$, we have defined
the truncated $L$-function as
\begin{equation*}
L_{P}(s,\chi) 
  :=
  \prod_{p > P} \Bigl( 1 - \frac{\chi(p)}{p^s} \Bigr)^{-1}
  =
  L(s,\chi) \prod_{p \le P} \Bigl( 1 - \frac{\chi(p)}{p^s} \Bigr)
  .
\end{equation*}
Hence
\begin{equation}
\label{SqPM-def}
S(q,P,M)
=
\sum_{\chi \ne \chi_0} 
\sum_{m =1}^{M} 
\frac{1}{m}
  \sum_{k \ge 1}
    \frac{\mu(k)}k \log\bigl(L_{P}(km,\chi^{km}) \bigr).
\end{equation}
Remark that $km=1$ only if $k=m=1$. Recalling  $\chi\ne\chi_0$, in the previous formula
we will never encounter the pole at $1$ of the Riemann zeta function.

Let now $K\ge 1$ be an integer. Splitting the sum over $k$ in \eqref{SqPM-def}, we have 
\begin{align}
\notag
S(q,P,M)
&=
\sum_{\chi \ne \chi_0} 
\sum_{m =1}^{M} 
\frac{1}{m}
\sum_{k = 1}^{K}
    \frac{\mu(k)}k \log\bigl(L_{P}(km,\chi^{km}) \bigr)
\\&
\notag
\hskip1cm+
\sum_{\chi \ne \chi_0} 
\sum_{m =1}^{M} 
\frac{1}{m}
\sum_{k > K}
    \frac{\mu(k)}k \log\bigl(L_{P}(km,\chi^{km}) \bigr)
\\&
\label{second-split}
= :
S(q,P,M,K)  + E_{2}(q,P,M,K),
\end{align}
say.
Combining \eqref{first-split}-\eqref{E1-def} and \eqref{second-split} we obtain
\begin{equation}
\label{rq-comput}
\log \RR(q) =  
r(q,P) + S(q,P,M,K)  + E_{1}(q,P,M) + E_{2}(q,P,M,K).
\end{equation}

Both $r(q,P)$ and $S(q,P,M,K)$ are finite sums 
and can be directly computed, while $E_{1}(q,P,M)$ and $E_{2}(q,P,M,K)$
must be estimated.
For $E_{1}(q,P,M)$ we will use \eqref{E1-estim}. 
To estimate $E_{2}(q,P,M,K)$, we will need the following 
lemma \cite[Lemma 1]{lanzac09}. 
\begin{lem}
\label{lz-lemma}
Let $\chi \pmod*{q}$ be a Dirichlet character and $n \ge 2$ be an integer.
If $P \ge 1$ is an integer then
\[
  \bigl| \log \bigl( L_{P}(n,\chi) \bigr) \bigr|
  \le
  \frac{P^{1 - n}}{n - 1}.
\]
\end{lem}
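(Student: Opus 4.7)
The plan is to expand $\log L_P(n,\chi)$ as a Taylor series in $\chi(p)/p^n$, reduce the resulting double sum to a subsum over integers, and finish with the standard integral comparison. Since $|\chi(p)/p^n|\le 1/p^n\le 1/4$ for every prime $p$ and every $n\ge 2$, the series for $\log(1-z)$ converges absolutely, so
\[
  \log L_P(n,\chi)
  = -\sum_{p>P}\log\!\Bigl(1-\frac{\chi(p)}{p^n}\Bigr)
  = \sum_{p>P}\sum_{m\ge 1}\frac{\chi(p^m)}{m\,p^{mn}},
\]
where in the last equality I use that $\chi$ is completely multiplicative, so $\chi(p)^m=\chi(p^m)$.

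Next I would pass to absolute values termwise. Using $|\chi(p^m)|\le 1$ together with the crude estimate $1/m\le 1$ yields
\[
  \bigl|\log L_P(n,\chi)\bigr|
  \le \sum_{p>P}\sum_{m\ge 1}\frac{1}{(p^m)^n}.
\]
By unique factorization, the pairs $(p,m)$ with $p$ prime and $m\ge 1$ parametrize \emph{distinct} prime powers $k=p^m$, each satisfying $k\ge p>P$. Hence the right-hand side is a subsum of $\sum_{k>P}1/k^n$, and by the integral test applied to the decreasing function $1/x^n$,
\[
  \sum_{k>P}\frac{1}{k^n}\le \int_P^\infty\frac{dx}{x^n}=\frac{P^{1-n}}{n-1};
\]
both the convergence and the closed form rely on $n\ge 2$.

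There is essentially no obstacle: the argument is three lines of routine manipulation. The only slightly creative step is discarding the $1/m$ factor in the double series. Keeping it would leave the less tractable quantity $-\sum_{p>P}\log(1-1/p^n)$, which does not admit an equally clean comparison with $P^{1-n}/(n-1)$. Trading a tiny loss of sharpness for the bound $1/m\le 1$ is exactly what allows the reduction to a subsum of $\sum_{k>P}1/k^n$ and produces the constant $1/(n-1)$ stated in the lemma.
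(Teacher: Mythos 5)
Your argument is correct: the expansion of $\log L_P(n,\chi)$, the termwise bound $|\chi(p^m)|/m\le 1$, the observation that distinct prime powers $p^m>P$ form a subset of the integers $k>P$, and the integral comparison $\sum_{k>P}k^{-n}\le P^{1-n}/(n-1)$ are all valid for $n\ge 2$. Note that the paper does not prove this lemma itself but imports it from \cite[Lemma~1]{lanzac09}, and your proof is essentially the standard argument given there, so nothing further is needed.
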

\noindent
Using this lemma it is easy to see that
\begin{equation}
\label{E2-estim}
  \vert E_{2}(q,P,M,K) \vert
  \leq
  \frac{2 P (q-1)}{K^2 (P - 1) (P^{K}-1)},
\end{equation}
where the estimate does not depend on $M$.

Let now $\Delta>0$ be an integer.
Taking $P=Aq$, by exploiting \eqref{E1-estim} and \eqref{E2-estim} one can choose $M, K$
such that $\vert E_{1}(q,P,M)  \vert + \vert E_{2}(q,P,M,K) \vert <10^{-\Delta}$.
Taking $P=Aq$, by exploiting \eqref{E1-estim} and \eqref{E2-estim} one can choose $M, K$
such that $\vert E_{1}(q,P,M)  \vert + \vert E_{2}(q,P,M,K) \vert <10^{-\Delta}$,
where $\Delta>0$ is any prescribed integer.
Hence, by computing $r(q,P) + S(q,P,M,K)$ in \eqref{rq-comput}, 
one obtains $\log \RR(q)$ with an accuracy 
of (at least) $\Delta$ decimals.

\smallskip
We remark that the same idea can be used for $\Sigma_2$, since it is enough to 
let $m$ start from $2$ in the analogue of $S(q,P)$.
For $\Sigma_1$, the algorithm is simpler since there is no sum over $m$ in \eqref{E1-def}, 
in $S(q,B,1)$ and in $S(q,B,1,K)$. Hence only $E_{2}(q,B,1,K)$ is present.
Moreover, if one has already computed $\log \RR(q)$ with sufficient
accuracy using the FFT, then it is enough to obtain $\Sigma_1$ 
in order to have $\Sigma_2=  \log \RR(q) - \Sigma_1$ too.

\section{Connections and analogies with the Mertens' constants \texorpdfstring{\\}{} 
in arithmetic progressions}
\label{sec:mertens-analogies}
In this final section we establish some connections and analogies for the prime sum
that defines $ \log \RR(q)$  with the ones involved in the definition of the
Mertens and Meissel-Mertens constants in arithmetic progressions:
$\MM(q,b), \BB(q,b), \CC(q,b)$, $1\le b < q$, $(b,q)=1$, since they can be 
written with sums over Dirichlet characters
that are similar to the one into \eqref{starting3}. 

Recalling that $q$ is prime and
using eq.~(1-1) in \cite{lanzac10b}, we have
\begin{equation*}
\sum\limits_{\substack{p\leq x\\ p\equiv b \pmod*{q}}}
\frac{1}{p}
=
\frac{\log \log x}{q-1} + \MM(q,b)+ O_{q}\Bigl(\frac{1}{\log x}\Bigr),
\end{equation*}
where $x \to + \infty$.
Moreover,  a direct computation
and Theorem 428 of Hardy-Wright \cite{HardyW2008} show that
\begin{equation*}
  (q-1) \MM(q, b)
  =
  \MM - \sum_{p\mid q}  \frac{1}{p}
  +
  \sum_{\chi \ne \chi_0} \overline{\chi}(b)\sum_p \frac{\chi(p)}{p},
\end{equation*}
where $\MM$ is the Meissel-Mertens constant defined in \eqref{Meissel-Mertens-def}.
From the previous equation, and using \eqref{starting3}, it is clear that 
\[
\Sigma_1 =(q-1) \MM(q, 1) - \MM +  \sum_{p\mid q} 
 \frac{1}{p}.
\]

Moreover, ${\mathcal R}(q)$ is related with the constant for the Mertens'
product in arithmetic progression. In \cite[p.~38]{lanzac07} it is proved
that 
\begin{equation*}
\prod_{\substack{p \le x\\ { p \equiv b \pmod*{q}}}} 
\Bigl(1 - \frac{1}{p}\Bigr)
\sim 
\frac{\CC(q,b)}{(\log x)^{\frac{1}{q-1}}},
\end{equation*}
as $q$ tends to infinity, where $\CC(q,b)$ 
verifies
\[
  \CC(q, b)^{q-1}
  =
  e^{-\Euler}
  \prod_p
    \Bigl( 1 - \frac 1p \Bigr)^{\alpha(p; q, b)},
\]
and $\alpha(p; q, b) = q - 2$ if $p \equiv b \bmod q$ and
$\alpha(p; q, b) = - 1$ otherwise.
In  \cite[p.~316]{lanzac09} (or \cite[eq.~(2-4)]{lanzac10b}) it is 
also proved that 
\begin{equation*}
  (q-1)
  \log \CC(q,b)
  =
  -\Euler
  + \log \frac{q}{q-1}
  -
  \sum_{\chi \ne \chi_0}
    \overline{\chi}(b)
    \sum_{m \ge 1} \frac1m \sum_p \frac{\chi(p)}{p^m}.
\end{equation*}
Besides some correction terms, 
the only difference between the formula for $\log \CC(q,1)$
and the one in \eqref{starting3} for 
$\log \RR(q)$ 
is that $\chi(p^m)$ in \eqref{starting3} must be replaced by $\chi(p)$.

Finally, for  $\BB(q,b)$ defined as
\[
\BB(q,b)
:=
\sum_{p\equiv b \bmod{q}} 
\Bigl(\log\bigl(1-\frac{1}{p}\bigr)+\frac{1}{p} \Bigr),
\]
in \cite[eq.~(2-3)]{lanzac10b} it is proved that
\begin{equation*}
  (q-1) \BB(q,b)
 :=
    \BB(q)
  -
  \sum_{\chi \ne \chi_0}
    \overline{\chi}(b)
    \sum_{m \ge 2}\frac1m \sum_p \frac{\chi(p)}{p^m},
\end{equation*}
where $\BB(q)  := - \sum_{m \ge 2}\frac1m \sum_{(p,q)=1} \frac{1}{p^m},$
represents the contribution of the principal character
$\chi_0 \pmod*{q}$ 
and equals, cf.\,\eqref{Meissel-Mertens-def},
\[
  \BB(q) 
  =
  \sum_{(p,q)=1} \Bigl(\log\bigl(1-\frac{1}{p}\bigr)+\frac{1}{p} \Bigr)
  =
  {\mathcal M}-\Euler - \sum_{p\mid q}  \Bigl(\log\bigl(1-\frac{1}{p}\bigr)+\frac{1}{p} \Bigr).
\]  
In this case too, besides some correction terms, 
the only difference between the formula for $\BB(q,1)$ 
and the one in \eqref{starting3} for  $\Sigma_2$  is that $\chi(p^m)$ in 
\eqref{starting3} must be replaced by $\chi(p)$.

The behavior of the prime sum over characters that involves $\chi^m(p)-\chi(p)$
is studied in \cite{lanzac10}, starting from eq.~(6)-(7) there.

\medskip
\noindent \textbf{Acknowledgment}. 
The authors would like to thank the referee for his/her suggestions
and thorough review of this work. In particular, (s)he provided a version of Lemma \ref{lem1-referee}, which we present here in improved form.
Work on this paper began in 2023 during a postdoctoral stay of the first author at 
the Max-Planck-Institut f\"ur Mathematik (Bonn) and completed
at the beginning of 2024 when she started
a postdoc at the University of Hong Kong. Both she and the third author thank MPIM 
and its staff for the hospitality and excellent working conditions. 
The first author extends gratitude to the third author for hosting 
and providing mentorship during her stay at MPIM.
The  computational work was carried out on machines
of the cluster located at the Dipartimento di Matematica ``Tullio Levi-Civita'' of 
the University of Padova, see \url{https://hpc.math.unipd.it}. 
The authors are grateful for having had such computing facilities 
at their disposal. 


\ifthenelse{\boolean{plots_included}}
{
\begin{figure}[H]
\includegraphics[scale=0.35,angle=0]{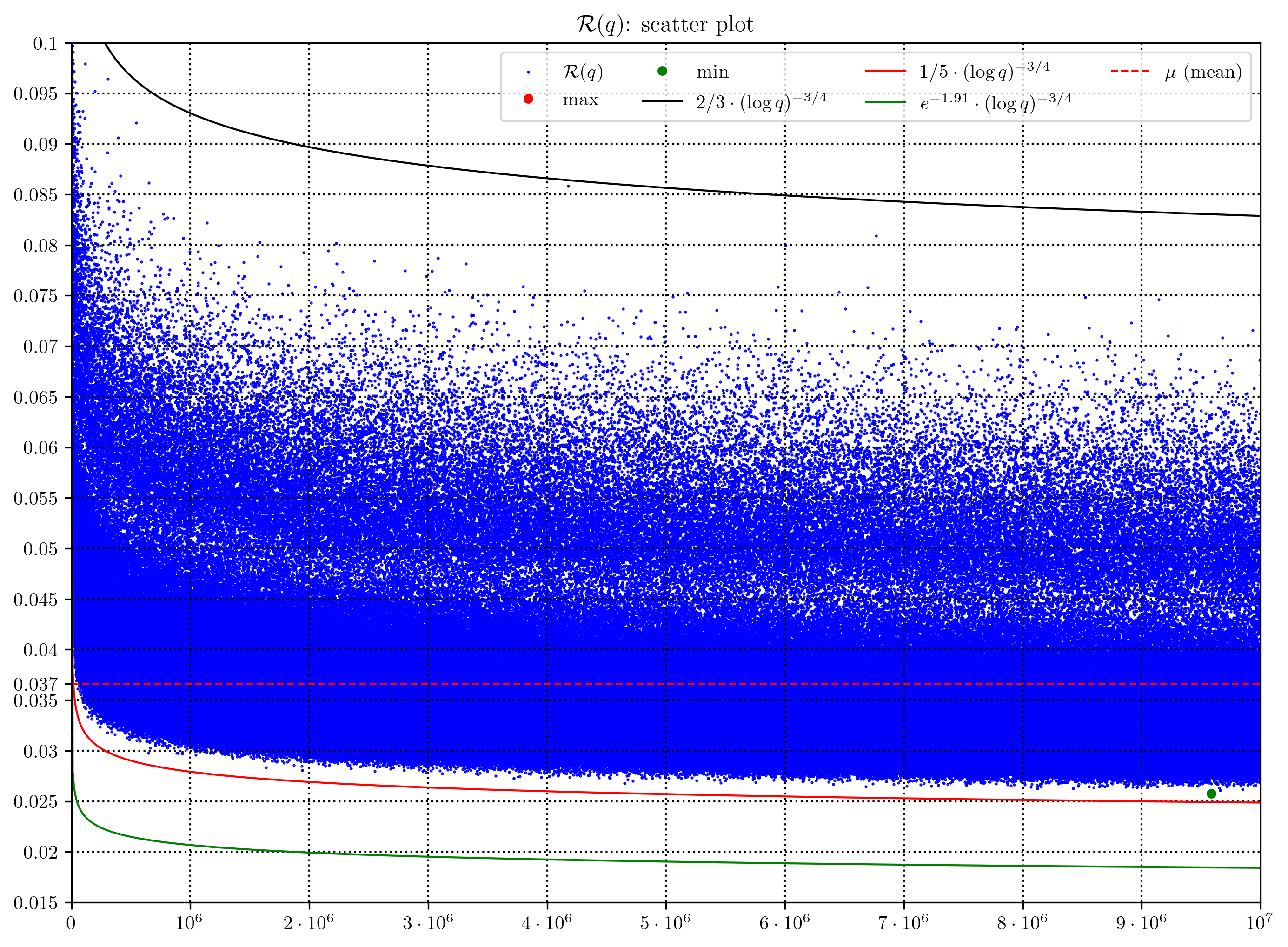} 
\caption{{\small The values of $\RR(q)$, $q$ prime, $3\le q\le  \bound$.
The maximal value (red dot) is attained at $q=3$ and its value is $0.604599\dots$;
much larger than the other plotted values.
The red dashed line represents the mean value.
}}
\label{fig1}
\end{figure}  

\begin{figure}[H]
\includegraphics[scale=0.35,angle=0]{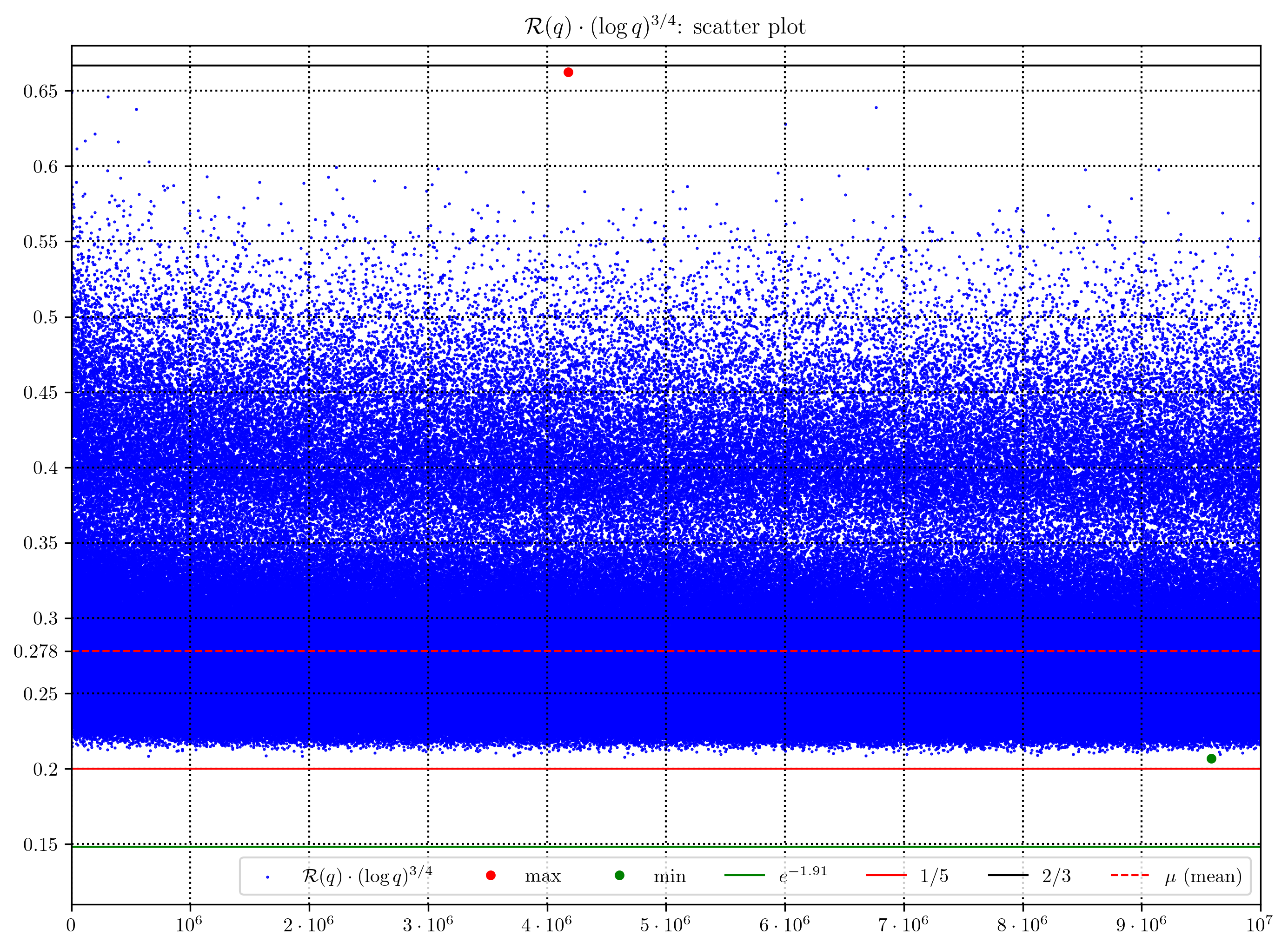} 
\caption{{\small The values of $\RR(q) ( \log q)^{3/4}$, $q$ prime, $3\le q\le  \bound$.
The red dashed line represents the mean value.
}}
\label{fig2}
\end{figure}  

\begin{figure}[H]
\scalebox{0.85}{
\begin{minipage}{0.5\textwidth} 
\includegraphics[scale=0.35,angle=0]{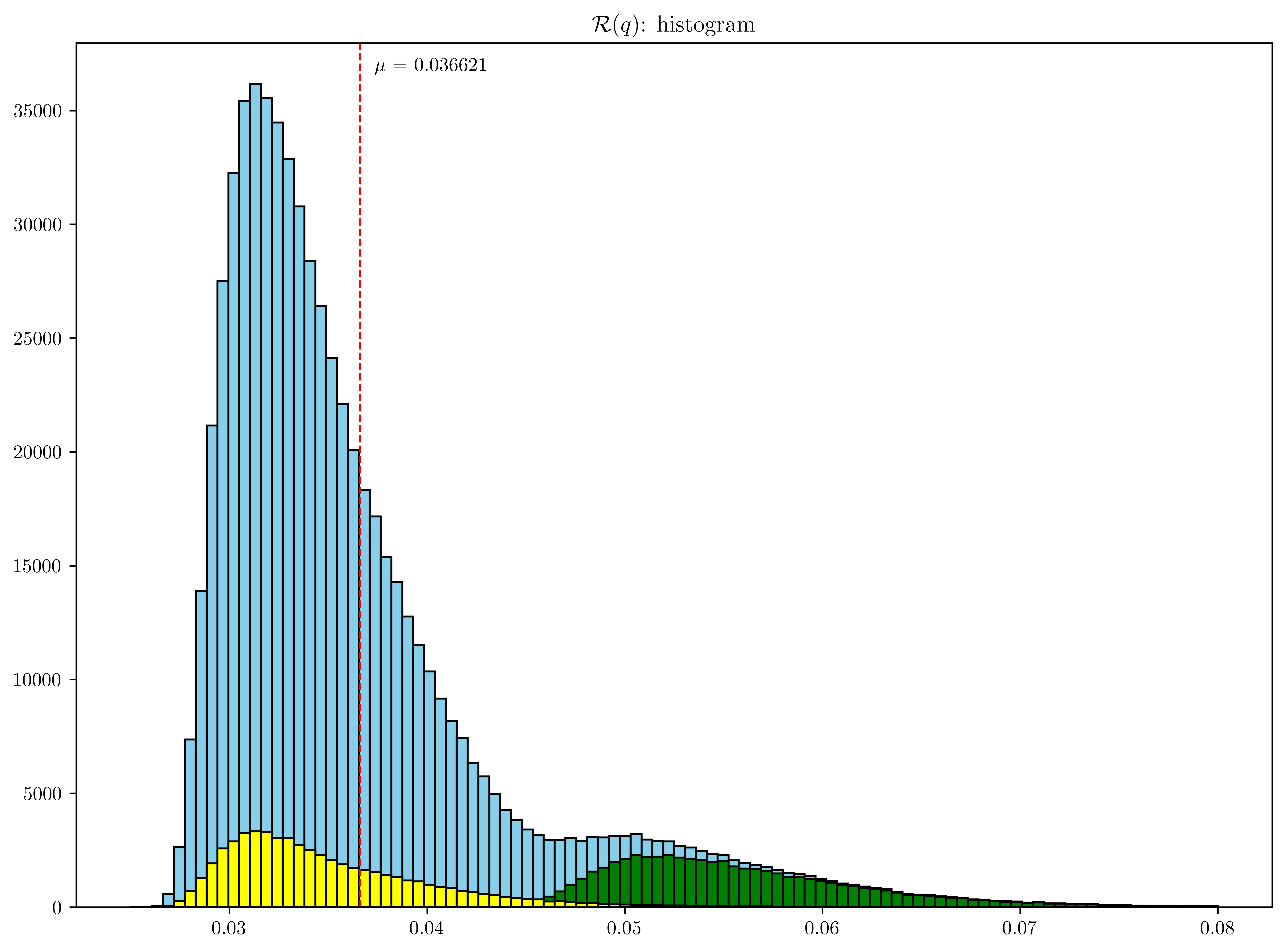}  
\end{minipage} 
\begin{minipage}{0.5\textwidth} 
\includegraphics[scale=0.35,angle=0]{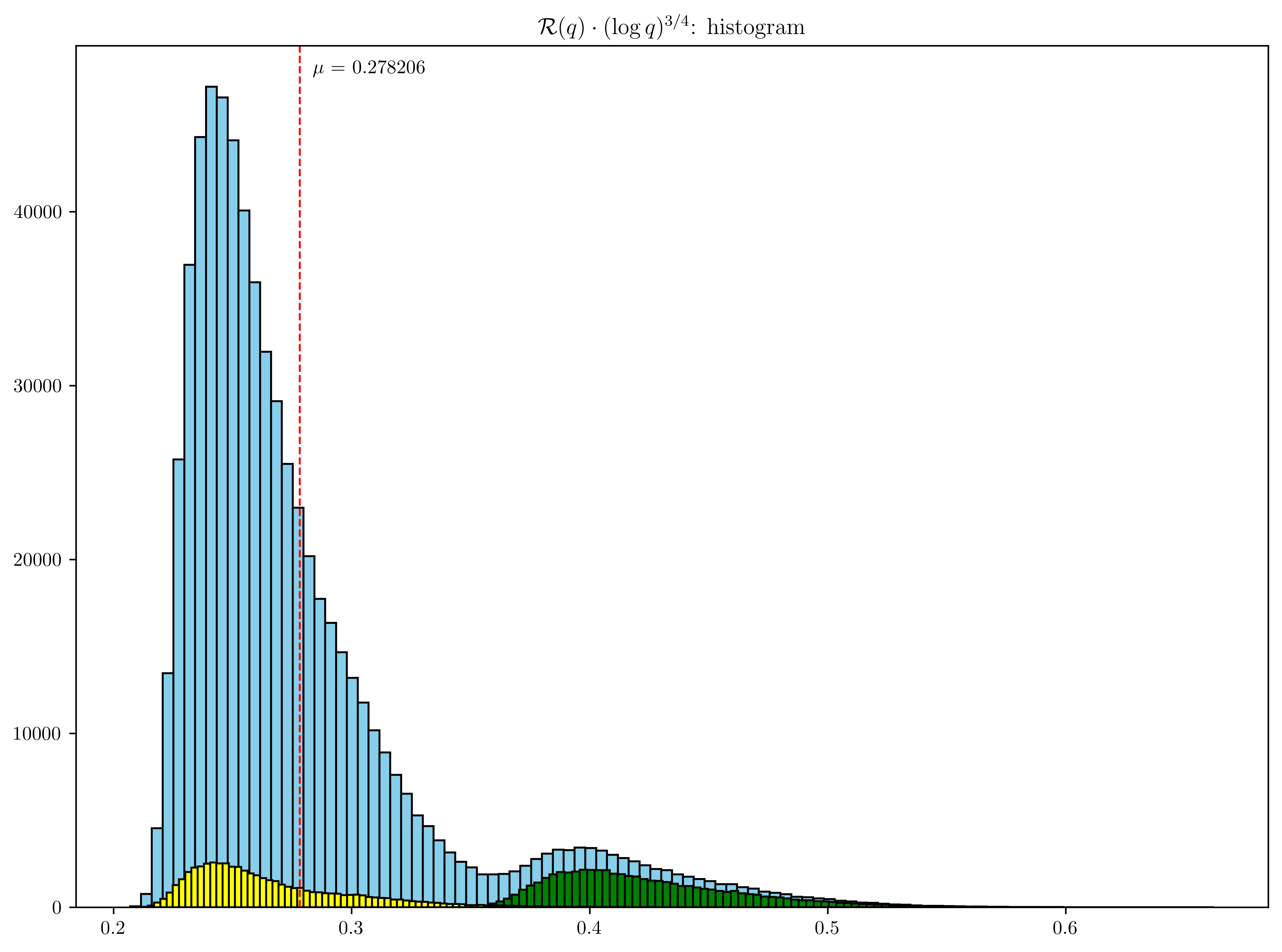}  
\end{minipage} 
}
\caption{On the left: the values of $\RR(q)$ (cerulean bars), $q$ prime, $3\le q\le  \bound$, 
but the contributions of the primes $q \ge 5$ such that $2q+1$ is prime (green bars) or $2q-1$ is prime 
(yellow bars) are superimposed.
On the right: idem, but for the normalized values $\RR(q) (\log q)^{3/4}$.
The red dashed lines represent the mean values.
}
\label{fig3}
\end{figure}  
}



\bigskip\noindent Neelam Kandhil  \par\noindent
{\footnotesize 
University of Hong Kong, Department of Mathematics, Pokfulam, Hong Kong.
\hfil\break
e-mail: {\tt kandhil@hku.hk}}

\medskip\noindent Alessandro Languasco \par\noindent
{\footnotesize Universit\`a di Padova,
Department of Information Engineering - DEI \hfil\break
via Gradenigo 6/b, 35131 Padova, Italy.\hfil\break
e-mail: {\tt alessandro.languasco@unipd.it}}

\medskip\noindent Pieter Moree  \par\noindent
{\footnotesize Max-Planck-Institut f\"ur Mathematik,
Vivatsgasse 7, D-53111 Bonn, Germany.\hfil\break
e-mail: {\tt moree@mpim-bonn.mpg.de}}

\end{document}